\numberwithin{equation}{section}
\newtheorem{theorem}{Theorem}[section]
\newtheorem{lemma}{Lemma}[section]
\newtheorem{remark}{Remark}[section]
\newtheorem{proposition}{Proposition}[section]
\newcommand{\dv}{\text{div}}
\title{Global large strong solutions to the compressible Navier-Stokes equations with density-dependent viscosities, case I: isentropic flows}
\author{Xiangdi H{\small UANG}$^{c}$, Jiaxu L{\small I}$^{b}$, Rong Z{\small HANG}$^{a,d}$ \thanks{Email addresses: xdhuang@amss.ac.cn (X. D. Huang), Jiaxvlee@gmail.com (J. X. Li), rzhang0921@gmail.com (R. Zhang). }  \\ 
{\normalsize a. School of Mathematics and Computer Sciences,}\\
{\normalsize Nanchang University, Nanchang 330031, P. R. China;}\\
{\normalsize b.  The Institute of Mathematical Sciences,}\\
{\normalsize  The Chinese University of Hong Kong,
Shatin, N.T.;}\\
{\normalsize c. Institute of Mathematics, Academy of Mathematics and Systems Sciences,}\\
{\normalsize Chinese Academy of Sciences, Beijing 100190, China;}\\
{\normalsize d. Institute of Mathematics and Interdisciplinary Sciences,}\\
{\normalsize Nanchang University, Nanchang 330031, P. R. China.}
}
\date{}
\begin{document}

\maketitle

\begin{abstract}
In this paper, we consider the Cauchy problem for the three-dimensional barotropic compressible Navier-Stokes equations with density-dependent viscosities.  
By considering the system as an elliptic-dominated structure and defining suitable energy functionals, after the elaborate index analysis, 
we establish the global existence of strong solutions as long as the initial data is large enough. This is a big contrast to the classical results where the initial data is a small perturbation of some resting states.

\textbf{Keywords}: barotropic compressible Navier-Stokes equations, density-dependent viscosities,  global large strong solutions
\end{abstract}

\section{Introduction}
We consider the three-dimensional isentropic compressible Navier-Stokes equations in a domain $\Omega\subset\mathbb{R}^3$:
\begin{equation}\label{cns}
\begin{cases}
    \rho_t+\mathrm{div}(\rho u)=0,\\
    (\rho u)_t+\mathrm{div}(\rho u\otimes u)+\nabla P=\mathrm{div}\mathbb{T},
\end{cases}
\end{equation}
where $t\geq0, x=(x_1,x_2,x_3)\in\Omega$ are time and space variables, respectively. $\rho=\rho(x,t)$ and $u=(u_1(x,t),u_2(x,t),u_3(x,t))$ represent, respectively, the density and the velocity. The pressure $P$ is given by
\begin{equation}
P=a\rho^\gamma,\ (a>0, \gamma>1),
\end{equation}
where $a$ is an entropy constant and $\gamma$ is the adiabatic exponent. $\mathbb{T}$ denotes the viscous stress tensor with the form
\begin{equation}
\mathbb{T}=2 \mu(\rho) \mathcal{D}u +\lambda(\rho)\mathrm{div}u\mathbb{I}_3,
\end{equation}
where $\mathcal{D}u =\frac{1}{2}\left(\nabla u+(\nabla u)^\top \right)$ is the deformation tensor and $\mathbb{I}_3$ is the three-dimensional identity matrix. The viscosities $\mu(\rho)$ and $\lambda(\rho)$ satisfy the following hypothesis:
\begin{equation}\label{vis-c}
\mu(\rho)=\mu\rho^\alpha,\ \lambda(\rho)=\lambda\rho^\alpha,
\end{equation}
with constants $\alpha\geq 0$ and
\begin{equation}\label{pc}
\mu>0,\ 2\mu+3\lambda\geq 0.
\end{equation}

Let $\Omega=\mathbb{R}^3$. We look for a global strong solution $(\rho,u)$ to the Cauchy problem \eqref{cns}-\eqref{pc} with the following initial data and far-field condition:
\begin{equation}
(\rho,u)|_{t=0}=(\rho_0,u_0),\quad x\in\mathbb{R}^3,
\end{equation}
\begin{equation}\label{7}
(\rho-\Bar{\rho},u)(t,x)\ \rightarrow 0,\quad \text{as}\ |x|\rightarrow\infty,\ t\geq0.
\end{equation}


When both the shear and bulk viscosities are positive constants $(\alpha=0)$, there is a lot of literature on the well-posedness of solutions to the problem \eqref{cns}-\eqref{pc} in multi-dimensional space, see \cite{Matsumura1980,Lions1993,Lions1998-2,Feireisl2001,HuangLiXin2012} and the references therein. 
In particular, 
the first global result is obtained by Matsumura-Nishida \cite{Matsumura1980} where they prove the global existence of classical solutions for initial data close to a nonvacuum equilibrium in some Sobolev space $H^s$. 
For large initial data, the global existence of weak solutions was first obtained by Lions \cite{Lions1993,Lions1998-2} provided $\gamma\geq\frac{9}{5}$, which is improved later by Feireisl-Novotny-Petzeltov\'a \cite{Feireisl2001} for $\gamma>\frac{3}{2}$. 
Recently, for the case that the initial density is allowed to vanish, Huang-Li-Xin \cite{HuangLiXin2012} obtains the global existence of classical solutions to the Cauchy problem for the barotropic compressible Navier–Stokes equations in three spatial dimensions with smooth initial data provided that the initial energy is suitably small.

In the theory of gas dynamics, the non-isentropic compressible Navier-Stokes can be derived from the Boltzmann equations through the Chapman-Enskog expansion, cf. Chapman-Cowling \cite{chapman1990mathematical} and Li-Qin \cite{li2012physics}.
For the well-known Sutherland's model, the viscosity coefficients are not constants but functions of the absolute temperature $\theta$ such as:
$$\mu(\theta)= \frac{c_1\theta^\frac{3}{2}}{\theta+s_0}, \quad \lambda(\theta)=\frac{c_2\theta^\frac{3}{2}}{\theta+s_0}, \quad s_0>0\ \mathrm{is}\ \mathrm{a}\ \mathrm{constant}.$$
In the meanwhile, according to Liu-Xin-Yang \cite{liu1997vacuum}, for isentropic and polytropic fluids, such dependence is inherited through the laws of Boyle and Gay-Lussac:
$$P=R\rho \theta=a\rho^\gamma,$$
for constant $R>0$, and one finds that the viscosity coefficients are functions of the density.

When the shear and bulk viscosities are formulated as powers of density, there are also many results on the well-posedness problems of \eqref{cns}. 
For the shear viscosity is constant and the second viscosity is the power of density ($\lambda=\rho^\beta$), Kazhikhov-Vaigant \cite{kazhikhov1995existence} firstly established the global existence of strong solutions in 2-dimensional periodic case, which is generalized to whole space \cite{huang2022global} and bounded domains \cite{fan2022global} recently.

If both viscosities are powers of the density, recent studies by Li-Xin \cite{15li-xin} as well as Vasseur-Yu \cite{Vasseur2016,bresch2021global} have independently investigated the global existence of weak solutions for compressible Navier-Stokes systems with viscosities satisfying the B-D condition \cite{bresch2004some}. 
Concerning the strong solutions with vacuum at infinity, for the case $\alpha> 1$, Li-Pan-Zhu \cite{19arma-li-pan-zhu} has demonstrated the existence of local classical solutions with vacuum, see \cite{17jmfm-li-pan-zhu} for $\alpha=1$. 
In the case of $\alpha\in(0,1)$, Xin-Zhu \cite{zhuxin2021} establishes a local regular solution with a far-field vacuum. While for the global existence of strong solutions, for $\alpha>1$, Xin-Zhu \cite{XIN2021108072} has also shown the global existence of regular solutions with vacuum at infinity if $\alpha>2\gamma-1$ for a class of smooth initial data that are of small density but possibly large velocities.

A natural question arises of whether large global solutions exist or not. 
Recently, Yu \cite{yu2023global} firstly obtains the global existence of large strong solutions if the initial data $(\rho_0,u_0)$ satisfy
\begin{equation*}
    \frac{3}{4} \bar \rho \le \rho_0\le \frac{5}{4} \bar \rho,\quad \rho_0-\bar \rho \in H^1 \cap D^{1,4}, \quad  u_0 \in H^2,
\end{equation*}
and $\alpha, \gamma$ satisfy 
\begin{equation}\label{yu1}
    \frac43<\gamma\le \alpha\le \frac53, \alpha+4\gamma>7, \alpha+\gamma\le 3,
\end{equation}
or 
\begin{equation}\label{yu2}
    \frac32<\gamma\le \alpha\le 2, \alpha+\frac23 \gamma>3,
\end{equation}
and the initial density is large enough. 
However, it should be noticed that the constraints on the initial data and $\alpha,\gamma $ in \cite{yu2023global} are too demanding and restrictive, which excludes many physical cases. 

In this paper, we aim to obtain the global existence of strong solutions to \eqref{cns} under more general initial data and less restrictions on $\alpha,\gamma$ in order to be applicable for more physical cases.

Indeed, we find another total different iteration scheme and we can obtain the existence of large strong solutions to the Navier-Stokes equations \eqref{cns} with density-dependent viscosities if the initial density is large enough under the assumption \eqref{ccc} (see below). This discovery also coincides with the phenomena showed in constant viscosities case that the global existence of strong solutions can be obtained if the viscosity is large enough, see Deng-Zhang-Zhao \cite{deng2012global}.


Before stating the main results, we explain the notation and conventions used throughout this paper. We denote
\begin{equation*}
    \int f dx =\int_{\mathbb{R}^3} f dx.
\end{equation*}
For $1\le r\le \infty$ and integer $k\ge 0$, we denote the standard homogeneous and inhomogeneous Sobolev spaces as follows:
\begin{equation*}
\begin{gathered}
L^r=L^r(\mathbb{R}^3), \quad D^{k,r}=\{u\in L^1_{loc}(\mathbb{R}^3)\ |\  \|\nabla^k u\|_{L^r}<\infty\},\\
W^{k,r}=L^r \cap D^{k,r}, \quad H^k=W^{k,2}.
\end{gathered}
\end{equation*}

Now we are ready to present our main theorem.
\begin{theorem}\label{th1}
	Let $\Omega=\mathbb{R}^3$ and $\bar \rho>1$ be a constant.
    Suppose that the initial data $(\rho_0,u_0)$ satisfies 
	\begin{equation}\label{ia}
		\frac{3}{4} \bar \rho \le \rho_0\le \frac{5}{4} \bar \rho,\quad \rho_0-\bar \rho \in L^{\gamma} \cap 
        D^{1,q},\ 3<q<6,\quad  u_0 \in H^2.
	\end{equation}
    Let $\alpha$ and $\gamma$ satisfy
	\begin{equation}\label{restri}
		\alpha>1,\ \gamma>1, 
	\end{equation}
    and 
    \begin{equation}\label{ccc}
        \alpha > \max\left\{\frac{\gamma+1}{2} , \frac{5q}{2(4q-3)}(\gamma-1) \right\}.
    \end{equation}
	 
	Then there exists a positive constant $\Lambda_0 =\Lambda_0 (a,\mu,\lambda, \alpha,\gamma,\|\rho_0-\bar\rho\|_{L^{\gamma}}, \|\nabla \rho_0\|_{L^q}, \|u_0\|_{H^2})$ 
    such that if
	\begin{equation}
		\bar\rho \ge \Lambda_0,
	\end{equation}
	then the problem  (\ref{cns})--(\ref{7}) admits a unique global strong solution $(\rho,u)$ in $\mathbb{R}^3\times(0,\infty)$ satisfying that 
	\begin{equation}
		\frac{2}{3} \bar \rho \le \rho \le \frac{4}{3} \bar \rho,
	\end{equation}
	and 
	\begin{equation}
		\left\{ \begin{array}{l}
    \rho\in C([0,\infty);L^\gamma \cap D^{1,q}), \ \rho_t\in C([0,\infty);L^q),\\
    \nabla u \in C([0,\infty);H^1)\cap L^2((0,\infty);W^{1,q}),\\
    u_t\in L^\infty((0,\infty);L^2)\cap  
     L^2((0,\infty);H^1).
    \end{array} \right.
	\end{equation}
	
\end{theorem}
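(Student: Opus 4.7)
The plan is to establish the theorem via a continuation argument: first invoke a local existence result for strong solutions with initial data satisfying (\ref{ia}) on some maximal interval $[0,T^*)$, then derive a priori estimates that are uniform in time under $\bar\rho \ge \Lambda_0$ with $\Lambda_0$ chosen large enough in terms of the data, and finally extend the solution to $[0,\infty)$. The underlying mechanism is that the viscosity coefficients scale like $\bar\rho^\alpha$ while $\nabla P$ scales like $\bar\rho^{\gamma-1}|\nabla\rho|$; condition (\ref{ccc}) ensures that after nondimensionalizing, the dissipation dominates every nonlinear contribution, which both traps $\rho$ inside $[\tfrac{2}{3}\bar\rho,\tfrac{4}{3}\bar\rho]$ and closes all higher-order estimates.

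The a priori estimates would proceed in a hierarchy. First, multiplying the momentum equation by $u$ and using the relative pressure potential $\Pi(\rho)=\tfrac{a}{\gamma-1}\bigl(\rho^\gamma-\bar\rho^\gamma-\gamma\bar\rho^{\gamma-1}(\rho-\bar\rho)\bigr)$ furnishes the basic energy identity, yielding control of $\sqrt\rho\,u$ in $L^\infty_tL^2_x$ and of $\int_0^t\!\!\int\mu(\rho)|\nabla u|^2\,dxds$. Next, testing the momentum equation against $u_t$ produces bounds for $\|\nabla u\|_{L^\infty_tL^2_x}$ and $\|\sqrt\rho\,u_t\|_{L^2_tL^2_x}$, the large coefficient $\mu\bar\rho^\alpha$ being used to absorb the convective term $\int\rho u\cdot\nabla u\cdot u_t\,dx$. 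Viewing the momentum equation at fixed time as a Lam\'e-type elliptic system for $u$ with coefficients of size $\bar\rho^\alpha$, and introducing the effective viscous flux
\begin{equation*}
G=(2\mu(\rho)+\lambda(\rho))\dv u - a(\rho^\gamma-\bar\rho^\gamma),
\end{equation*}
I would derive a $W^{2,q}$ estimate of the schematic form
\begin{equation*}
\|\nabla^2 u\|_{L^q}\le C\bar\rho^{-\alpha}\bigl(\|\rho u_t\|_{L^q}+\|\rho u\cdot\nabla u\|_{L^q}+\bar\rho^{\gamma-1}\|\nabla\rho\|_{L^q}\bigr),
\end{equation*}
from which the Sobolev embedding $W^{1,q}\hookrightarrow L^\infty$ (valid for $q>3$) converts the $L^\infty$-norm of $\dv u$ into a quantity that becomes arbitrarily small in $\bar\rho$.

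For the pointwise density bound I would integrate the continuity equation along characteristics in the Lagrangian form $\frac{d}{dt}\log\rho=-\dv u$; smallness of $\int_0^t\|\dv u\|_{L^\infty}\,ds$ then locks $\rho$ inside the target interval $[\tfrac23\bar\rho,\tfrac43\bar\rho]$, which in turn justifies the reuse of the elliptic estimate above. For the $D^{1,q}$ bound on $\rho$, differentiating the continuity equation gives a transport equation for $\nabla\rho$, and Gronwall's inequality combined with the just-established control of $\|\nabla u\|_{L^\infty}$ yields the needed estimate; then $\rho_t\in C([0,\infty);L^q)$ is immediate from the continuity equation, and the regularity statements for $u_t$ follow by standard arguments.

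The main obstacle, corresponding to what the authors describe as the elaborate index analysis, is the careful bookkeeping of powers of $\bar\rho$ through every nonlinear product, Sobolev embedding, and interpolation inequality used in the bootstrap. Condition (\ref{ccc}) is tailored precisely so that each resulting error factor is a negative power of $\bar\rho$. The bound $\alpha>(\gamma+1)/2$ should emerge from controlling $\|\sqrt\rho\,u_t\|_{L^2}$ against pressure contributions through the embedding $H^1\hookrightarrow L^6$, while the more delicate bound $\alpha>\tfrac{5q}{2(4q-3)}(\gamma-1)$ should arise from coupling the $W^{1,q}$ elliptic estimate with the $D^{1,q}$ transport bound for $\rho$. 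Closing the resulting coupled system of inequalities as a single bootstrap, with $\Lambda_0$ depending only on the prescribed norms of the initial data and the physical parameters $a,\mu,\lambda,\alpha,\gamma$, will be the technical core of the proof.
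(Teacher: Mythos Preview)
Your overall architecture—local existence, a bootstrap tracking powers of $\bar\rho$, continuation—matches the paper, but two pieces of the bootstrap are missing or miscast in ways that prevent closure.

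First, your effective viscous flux $G=(2\mu(\rho)+\lambda(\rho))\dv u-a(\rho^\gamma-\bar\rho^\gamma)$ retains the variable coefficient $\rho^\alpha$, which obstructs clean elliptic estimates. The paper's key maneuver is to divide the momentum equation by $\rho^\alpha$ \emph{first}, obtaining the constant-coefficient Lam\'e system $\mu\Delta u+(\mu+\lambda)\nabla\dv u=\nabla\mathcal{P}+H$ with reduced pressure $\nabla\mathcal{P}=a\gamma\rho^{\gamma-\alpha-1}\nabla\rho$ and $H=\rho^{1-\alpha}\dot u-\alpha\rho^{-1}\nabla\rho\cdot(2\mu\mathcal{D}u+\lambda\dv u\,\mathbb I_3)$. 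The flux is then $F=(2\mu+\lambda)\dv u-(\mathcal{P}(\rho)-\mathcal{P}(\bar\rho))$ with \emph{constant} $\mu,\lambda$. Your schematic $W^{2,q}$ estimate omits the commutator term $\bar\rho^{-1}\|\nabla\rho\|_{L^q}\|\nabla u\|_{L^\infty}$ that this division produces. More importantly, you stop the hierarchy after testing with $u_t$ and defer $u_t$-regularity to ``standard arguments.'' In the paper this is not tail-end cleanup but a fourth bootstrap quantity: one differentiates the momentum equation in $t$, tests against $u_t$, and proves $\sup_t\|\sqrt\rho\,u_t\|_{L^2}^2+\bar\rho^\alpha\int_0^T\|\nabla u_t\|_{L^2}^2\,dt\le C\bar\rho^{2\alpha-1}$. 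Without $\int_0^T\|\nabla u_t\|_{L^2}^2\,dt$ you cannot interpolate to control $\int_0^T\|\rho u_t\|_{L^q}^2\,dt$, and the Gronwall step for $\|\nabla\rho\|_{L^q}$ does not close.

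Second, your pointwise density bound via characteristics requires $\int_0^t\|\dv u\|_{L^\infty}\,ds$ to be small uniformly in $t$; but the estimates only yield $\int_0^T\|\nabla u\|_{L^\infty}^2\,dt\le C\bar\rho^{\gamma-\alpha}$, which gives no uniform $L^1_t$ control. The paper avoids characteristics entirely and instead bounds $\|\rho-\bar\rho\|_{L^\infty}$ by Gagliardo--Nirenberg interpolation between the energy quantity $\|\rho-\bar\rho\|_{L^\gamma}$ and the bootstrap quantity $\|\nabla\rho\|_{L^q}$, checking that the resulting exponent of $\bar\rho$ is strictly below $1$. Relatedly, your attribution of the constraint $\alpha>\tfrac{5q}{2(4q-3)}(\gamma-1)$ is off: it does not come from coupling elliptic and transport estimates, but from the interpolation $\|P(\rho)-P(\bar\rho)\|_{L^6}\le C\|P-\bar P\|_{L^1}^{3(q-2)/(2(4q-3))}\|\nabla P\|_{L^q}^{5q/(2(4q-3))}$, used both in the $\|\nabla u\|_{L^6}$ estimate and in bounding $\|\sqrt{\rho_0}\,u_{t}(0)\|_{L^2}$.
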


\begin{remark}
    Recall the result by Xin-Zhu \cite{XIN2021108072} where they obtain the global regular solutions if $\alpha>2\gamma-1$, which becomes a classical one if $1<\min\{\alpha,\gamma\}<3$. In this paper, we can obtain a large solution only under the assumption that $\alpha,\gamma$ satisfy \eqref{restri}-\eqref{ccc}. 
\end{remark}

\begin{remark}
    Compared to the result by Yu \cite{yu2023global}, where they required $\alpha, \gamma$ satisfying
    \eqref{yu1} and \eqref{yu2}. By using a different iteration scheme, we can extend the range of $\alpha,\gamma$ to more general cases. Moreover, the assumption on initial density $\|\nabla \rho_0\|_{L^2}$ is removed, which plays a key role in Yu's analysis. 
\end{remark}

\begin{remark}
    For the constant viscosities case \cite{deng2012global}, the global strong solution is obtained if the viscosity coefficient is large enough. For the density-dependent model, the Theorem \ref{th1} also implies that the global large solution exists if the viscosities are large enough.
\end{remark}



We now provide our analysis and commentary on the key aspects of this paper. For the density-dependent viscosities model, it's difficult to study the well-posedness due to the strong coupling between $\rho$ and $u$. Different from the approach \cite{17jmfm-li-pan-zhu,zhuxin2021,19arma-li-pan-zhu} where they consider the system from the 'quasi-symmetric
hyperbolic'-'degenerate elliptic' structure, we consider the system as elliptic-dominated equations. More precisely, rewrite the momentum equation as 
\begin{equation}\label{ellip}
    \mu \Delta u +(\mu+\lambda) \nabla \dv u  =\rho^{1-\alpha}\dot u
    +a \gamma\rho^{\gamma-\alpha-1} \nabla \rho
    -\alpha\rho^{-1}\nabla\rho\cdot(2\mu \mathcal{D}u +\lambda \dv u).
\end{equation}
where 
\begin{equation}
    \dot u\triangleq u_t+u\cdot\nabla u
\end{equation}
denote the material derivatives.
Therefore, if $\alpha>\max\{1,\gamma\}$ and $\rho>>1$, each term of the right-hand side of Lame system \eqref{ellip} can be viewed as a small perturbation term, this is the main observation of Yu \cite{yu2023global}. However, in this paper, we do not need $\alpha>\gamma$, thus we should design a new iteration scheme to deal with possible large oscillation of the pressure term in \eqref{ellip}. Moreover,
to obtain the global existence of strong solutions, the key is to derive the uniform in time estimates, especially, the upper bound of the density according to the blow-up criteria \cite{huang2011serrin}. For this purpose, we need uniform estimates of $\nabla \rho$.

The rest of the paper is organized as follows: In Section \ref{s2}, we collect some elementary facts and inequalities that will be needed in later analysis. Section \ref{s3} is devoted to the a priori estimates that are needed to obtain the global existence of strong solutions. Then the main result, Theorem \ref{th1} is proved in Section \ref{s4}.

\section{Preliminaries}\label{s2}

In this section, we will recall some known facts and elementary inequalities that will be used frequently later.

We start with the local existence and uniqueness of strong solutions.
Similar to Zhang and Zhao \cite{zhang2014existence}, we can construct a unique local strong solution as follows:

\begin{lemma}[Local Existence]\label{local}
    Assume that the initial data $(\rho_0, u_0)$ satisfies \eqref{ia}. 
    Then there exists a small time $T$ and a unique strong solution $(\rho, u)$ to the Cauchy problem (\ref{cns})--(\ref{vis-c}) such that
    \begin{equation}\label{l-r}
    \left\{ \begin{array}{l}
    \rho\in C([0,T];D^{1,q}),\
    \nabla u \in C([0,T];H^1)\cap L^2([0,T];W^{1,q}),\\
    \rho_t\in C([0,T];L^q),\
    \sqrt{\rho}u_t\in L^\infty([0,T];L^2),\ 
    u_t\in L^2([0,T];H^1).
    \end{array} \right.
\end{equation}
\end{lemma}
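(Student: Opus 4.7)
Because $\rho_0\ge\tfrac34\bar\rho>0$ by \eqref{ia}, the viscosity coefficients $\mu(\rho_0)=\mu\rho_0^\alpha$ and $\lambda(\rho_0)=\lambda\rho_0^\alpha$ are uniformly strictly positive, so the momentum equation is non-degenerate and the delicate analysis required in the vacuum case is not needed here. My plan is therefore a classical linearisation-iteration scheme, essentially following Zhang-Zhao \cite{zhang2014existence}.

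\smallskip

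\noindent\emph{Iteration.} Set $(\rho^0,u^0):=(\rho_0,u_0)$, and given $(\rho^n,u^n)$ define $(\rho^{n+1},u^{n+1})$ by the two decoupled linear problems
\begin{equation*}
\rho^{n+1}_t+\dv(\rho^{n+1}u^n)=0,\qquad \rho^{n+1}|_{t=0}=\rho_0,
\end{equation*}
\begin{equation*}
\rho^n u^{n+1}_t+\rho^n u^n\!\cdot\!\nabla u^{n+1}-\dv\bigl(2\mu(\rho^n)\mathcal{D}u^{n+1}\bigr)-\nabla\bigl(\lambda(\rho^n)\dv u^{n+1}\bigr)=-\nabla P(\rho^n),
\end{equation*}
with $u^{n+1}|_{t=0}=u_0$. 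The first equation is solved along the characteristics of $u^n$; the second is a linear, uniformly strongly-elliptic parabolic system whose coefficients are H\"older continuous in $x$ (since $W^{1,q}\hookrightarrow C^{0,1-3/q}$ for $q>3$), so the standard $L^p$-parabolic theory applies.

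\smallskip

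\noindent\emph{Uniform estimates and passage to the limit.} On a short interval $[0,T^\ast]$ the pointwise bounds $\tfrac23\bar\rho\le\rho^{n+1}\le\tfrac43\bar\rho$ propagate by characteristics; differentiating the transport equation in $x$ and applying Gronwall then gives $\|\nabla\rho^{n+1}\|_{L^\infty_tL^q}$, and the equation itself yields $\|\rho^{n+1}_t\|_{L^\infty_tL^q}$, all controlled by $\|\nabla u^n\|_{L^1_tW^{1,q}}$. For the momentum equation, testing successively against $u^{n+1}$, against $u^{n+1}_t$, and against $\dv(2\mu(\rho^n)\mathcal{D}u^{n+1})+\nabla(\lambda(\rho^n)\dv u^{n+1})$, combined with elliptic $L^q$-regularity, gives
$\nabla u^{n+1}\in L^\infty_tH^1\cap L^2_tW^{1,q}$ and $u^{n+1}_t\in L^\infty_tL^2\cap L^2_tH^1$. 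The initial bound for $\|\sqrt{\rho_0}u^{n+1}_t(0)\|_{L^2}$ comes from evaluating the momentum equation at $t=0$, using $u_0\in H^2$, $\nabla\rho_0\in L^q$ and the positive lower bound on $\rho_0$. Choosing $T^\ast$ sufficiently small (depending only on the initial data) closes all estimates uniformly in $n$. A standard energy estimate on the differences $(\rho^{n+1}-\rho^n,u^{n+1}-u^n)$ in $L^\infty_tL^2\cap L^2_tH^1$ shows the sequence is Cauchy; the limit is a strong solution in the class \eqref{l-r}, and the same difference argument supplies uniqueness.

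\smallskip

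\noindent\emph{Main obstacle.} The delicate step is the $L^q$-bound for $\nabla\rho^{n+1}$. Differentiating the transport equation produces a commutator of the type $\nabla u^n\cdot\nabla\rho^{n+1}$, whose control through $\|\nabla u^n\|_{L^\infty}\lesssim\|\nabla u^n\|_{W^{1,q}}$ enters the Gronwall exponent; but the parabolic estimate for $u^{n+1}$ itself depends on $\|\nabla\rho^n\|_{L^q}$ via the stress tensor $\mu(\rho^n)\mathcal{D}u^{n+1}$. Balancing these two feedbacks is what forces $T^\ast$ small while keeping it independent of $n$, and is the only point where the argument is genuinely subtle.
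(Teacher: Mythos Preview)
Your proposal is correct and matches the paper's approach: the paper does not give an independent proof of this lemma at all, but simply states it with the remark ``Similar to Zhang and Zhao \cite{zhang2014existence}, we can construct a unique local strong solution as follows.'' Your sketch is precisely an outline of the Zhang--Zhao linearisation-iteration argument, so there is nothing to add or correct.
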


Next, the following well-known Gagliardo-Nirenberg inequality  will be used frequently later (see \cite{ladyzhenskaia1968linear}).
\begin{lemma}[Gagliardo-Nirenberg Inequality]\label{G-N}
Let 1 $\leq q  +\infty$ be a positive extended real quantity. Let $j$ and $m$  be non-negative integers such that $ j < m$. Furthermore, let $1 \leq r \leq \infty$ be a positive extended real quantity,  $p \geq 1$  be real and  $\theta \in [0,1]$  such that the relations
\begin{equation}
    \dfrac{1}{p} = \dfrac{j}{n} + \theta \left( \dfrac{1}{r} - \dfrac{m}{n} \right) + \dfrac{1-\theta}{q}, \qquad \dfrac jm \leq \theta \leq 1,
\end{equation}
hold. Then, 
\begin{equation}
    \|\nabla^j u\|_{L^p(\mathbb{R}^n)} \leq C\|\nabla^m u\|_{L^r(\mathbb{R}^n)}^\theta\|u\|_{L^q(\mathbb{R}^n)}^{1-\theta},
\end{equation}
where $u \in L^q(\mathbb{R}^n)$  such that  $\nabla^m u \in L^r(\mathbb{R}^n)$. Moreover, if $q>1$ and $r>3$,
\begin{equation}
\|u\|_{C(\overline{\mathbb{R}^n})}\leq C\|u\|^{q(r-3)/(3r+q(r-3))}_{L^q(\mathbb{R}^n)}\|\nabla u\|^{3r/(3r+q(r-3))}_{L^r(\mathbb{R}^n)},
\end{equation}
where $u \in L^q(\mathbb{R}^n)$  such that  $\nabla u \in L^r(\mathbb{R}^n)$.
The constant $C > 0$  depends on the parameters $j,\,m,\,n,\,q,\,r,\,$ and $\theta$.

\end{lemma}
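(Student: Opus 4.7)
The plan is to establish this classical inequality by adapting the Gagliardo--Nirenberg argument in three stages: the base Sobolev embedding via slicing, the general interpolation via induction on the derivative gap $m-j$, and the pointwise estimate via a two-scale decomposition.

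\textbf{Stage 1: Base Sobolev embedding.} For $u\in C_c^\infty(\mathbb{R}^n)$, I would first prove Gagliardo's inequality $\|u\|_{L^{n/(n-1)}}\le n^{-1}\|\nabla u\|_{L^1}$ by the slicing trick: the fundamental theorem of calculus gives $|u(x)|\le \int_{-\infty}^{\infty}|\partial_i u|\,dx_i$ for each $i$, hence
\begin{equation*}
|u(x)|^{n/(n-1)}\le \prod_{i=1}^{n}\Bigl(\int_{-\infty}^{\infty}|\partial_i u|\,dx_i\Bigr)^{1/(n-1)},
\end{equation*}
and iterated application of the generalized H\"older inequality (integrating successively in $x_1,\dots,x_n$) yields the bound. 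Substituting $v=|u|^{\gamma}$ with $\gamma=(n-1)p/(n-p)$ and applying H\"older upgrades this to $\|u\|_{L^{np/(n-p)}}\le C\|\nabla u\|_{L^p}$ for $1\le p<n$; a standard density/truncation argument extends it to the homogeneous Sobolev space.

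\textbf{Stage 2: General interpolation.} Observe first that the exponent identity is forced by homogeneity: demanding scale invariance of the proposed inequality under $u\mapsto u(\lambda\,\cdot\,)$ gives exactly the relation $\tfrac1p=\tfrac jn+\theta(\tfrac1r-\tfrac mn)+\tfrac{1-\theta}{q}$. To produce a finite constant, I would induct on the gap $m-j$. The base case $m-j=1$ is a Landau--Kolmogorov type estimate
\begin{equation*}
\|\nabla^j u\|_{L^p}\le C\,\|\nabla^{j+1}u\|_{L^r}^{\theta}\,\|\nabla^{j-1}u\|_{L^q}^{1-\theta},
\end{equation*}
which I would derive by integration by parts (writing $\int|\nabla^j u|^p\,dx=-\int\nabla^{j-1}u\cdot\nabla(|\nabla^j u|^{p-2}\nabla^j u)\,dx$ componentwise), followed by H\"older's inequality and the Sobolev embedding of Stage 1. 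Iterating this one-gap inequality and absorbing intermediate terms via Young's inequality handles general $m>j$; a cutoff approximation extends the result to the claimed Sobolev class.

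\textbf{Stage 3: Pointwise bound.} For $r>3$, I would derive the two-scale estimate
\begin{equation*}
|u(x)|\le C R^{-3/q}\|u\|_{L^q(B_R(x))}+C R^{1-3/r}\|\nabla u\|_{L^r(B_R(x))}
\end{equation*}
by bounding the ball average $\langle u\rangle_{B_R(x)}$ through H\"older, and the oscillation $|u(x)-\langle u\rangle_{B_R(x)}|$ through a Morrey-type representation as a Riesz potential of $\nabla u$ followed by H\"older (valid since $r>3$ ensures integrability of the kernel). Optimizing over $R>0$ yields exactly the exponents $q(r-3)/(3r+q(r-3))$ and $3r/(3r+q(r-3))$, and the continuity of $u$ follows from the uniform local oscillation bound.

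\textbf{Main obstacle.} The hardest technical point is Stage 2: verifying that the induction closes with a uniform constant as $\theta$ approaches the boundary values $j/m$ and $1$, and rigorously justifying the integration by parts for merely Sobolev functions, which requires a careful approximation by compactly supported smooth functions together with truncation of $|\nabla^j u|^{p-2}\nabla^j u$ away from zero to make the resulting calculation legitimate.
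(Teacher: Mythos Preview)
The paper does not actually prove this lemma: it is stated as a well-known preliminary result with a citation to Lady\v{z}enskaja--Ural'ceva \cite{ladyzhenskaia1968linear}, and no argument is given in the text. Your three-stage plan (Gagliardo slicing for the base Sobolev embedding, induction on the derivative gap for the general interpolation, and a Morrey-type two-scale decomposition for the $L^\infty$ bound) is the standard classical route and is essentially what one finds in the cited reference or in Nirenberg's original paper, so in that sense you are not diverging from the paper---you are simply supplying the details the paper chose to outsource.

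One small remark on your Stage~3: the stated $L^\infty$ bound in the lemma has exponents involving $r-3$ and $3r$, which only make sense for $n=3$ (the case the paper actually uses), even though the display writes $\mathbb{R}^n$. Your argument correctly targets that three-dimensional case; just be aware the lemma as written conflates the general-$n$ interpolation inequality with an $n=3$ pointwise estimate.
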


\section{A priori estimates}\label{s3}

For any fixed time $T>0,$ let $(\rho,u)$ be a strong solution to \eqref{cns}-\eqref{7} on $\mathbb{R}^3 \times (0,T]$ with initial data $(\rho_0,u_0)$ satisfying \eqref{ia}.

Define
\begin{gather}\label{As1}
    \mathcal{E}_{\rho,1}(T) \triangleq \sup_{t\in[0,T] }\|\rho-\bar\rho \|_{L^\infty},\\
	\mathcal{E}_{\rho,2}(T) \triangleq \sup_{t\in[0,T] }\|\nabla \rho \|_{L^q}^2
	+ \bar \rho^{\gamma-\alpha} \int_0^{T}  \|\nabla \rho \|_{L^q}^2dt,\\
	\mathcal{E}_{u,1}(T) \triangleq \mu \frac{\bar\rho^{\alpha} }{2^{\alpha+1}} \sup_{t\in[0,T] }\|\nabla u \|_{L^2}^2
	+ \frac 12  \int_0^{T}  \|\sqrt{\rho} u_t \|_{L^2}^2dt,\\
	\mathcal{E}_{u,2}(T) \triangleq \sup_{t\in[0,T] }\|\sqrt{\rho} u_t \|_{L^2}^2
	+ \mu \frac{\bar\rho^{\alpha} }{2^{\alpha+1}}  \int_0^{T}  \|\nabla u_t \|_{L^2}^2dt.
\end{gather}

We have the following key proposition.

\begin{proposition}\label{pr}
	Under the conditions of Theorem \ref{th1}, for 
    \begin{equation}\label{b}
        \beta=\max\{3-\gamma,0\}
    \end{equation}
	there exist  positive constants $N_2=N_2(\mu, \lambda, \|\nabla u_0\|_{L^2})$, $N_i=N_i(a,\mu,\lambda, \alpha,\gamma,\|\rho_0-\bar\rho\|_{L^{\gamma}},\\ \|\nabla \rho_0\|_{L^q}, \|\nabla u_0\|_{H^1}), i=1,3,$ and  $\Lambda_0 =\Lambda_0 (a,\mu,\lambda, \alpha,\gamma,\|\rho_0-\bar\rho\|_{L^{\gamma}}, \|\nabla \rho_0\|_{L^q}, \|u_0\|_{H^2} )$ 
    such that if $(\rho,u)$ is a smooth solution to the problem (\ref{cns})--(\ref{7}) on $\mathbb{R}^3 \times (0,T]$ satisfying
	\begin{equation}\label{a1}
        \begin{gathered}
        \mathcal{E}_{\rho,1}(T) \le \frac{\bar\rho}{2}, \quad\mathcal{E}_{\rho,2}(T)  \le 2N_1
         \bar\rho^{\beta}, \\
		\mathcal{E}_{u,1}(T) \le 2^{\alpha+2}N_2 \bar\rho^{\alpha}, 
        \quad\mathcal{E}_{u,2}(T) \le 3 N_3 \bar\rho^{2\alpha-1},   
        \end{gathered}
	\end{equation}
	the following estimates hold:
	\begin{equation}\label{a2}
        \begin{gathered}
        \mathcal{E}_{\rho,1}(T) \le \frac{\bar\rho}{4}, \quad\mathcal{E}_{\rho,2}(T) \le N_1 \bar\rho^{\beta}, \\
		\mathcal{E}_{u,1}(T) \le 2^{\alpha+1}N_2 \bar\rho^{\alpha}, 
        \quad\mathcal{E}_{u,2}(T) \le 2 N_3 \bar\rho^{2\alpha-1},
        \end{gathered}
	\end{equation}
	provided 
	\begin{equation}
		\bar\rho\ge \Lambda_0,
	\end{equation}  
    with $\Lambda_0$ defined in \eqref{lambda}.
\end{proposition}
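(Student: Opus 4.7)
The plan is a continuity-bootstrap argument: assume the weaker bounds \eqref{a1} on $[0,T]$ and derive the strict improvements \eqref{a2} by choosing $\bar\rho$ large enough that every "bad" term carries a negative power of $\bar\rho$ and can therefore be absorbed. The four bounds are coupled, and I would derive them in the order $\mathcal{E}_{u,1}\to\mathcal{E}_{u,2}\to\mathcal{E}_{\rho,2}\to\mathcal{E}_{\rho,1}$, since the density estimates rely on the $W^{1,q}$-regularity of $u$ that comes from the velocity bootstraps, while the velocity bootstraps only need the pointwise bound $\rho\sim\bar\rho$ that is already built into \eqref{a1}.

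For $\mathcal{E}_{u,1}$ I would test the momentum equation with $u_t$. Because $\mu(\rho)=\mu\rho^\alpha\sim\mu\bar\rho^\alpha$ uniformly under the a priori control of $\mathcal{E}_{\rho,1}$, the dissipative part gives $\frac{d}{dt}\bar\rho^\alpha\|\nabla u\|_{L^2}^2+\|\sqrt\rho u_t\|_{L^2}^2$ up to benign error. The right-hand side consists of the pressure term (handled by integrating by parts in $x$ then in $t$ and using $\|\rho_t\|_{L^q}\lesssim\bar\rho\|\nabla u\|_{L^q}$), the convective term $\int\rho u\cdot\nabla u\cdot u_t$, and the commutator from $\partial_t\mu(\rho)$; each is dispatched by Gagliardo--Nirenberg and the a priori bounds to produce either a small multiple of the dissipation or a prefactor $\bar\rho^{-\epsilon}$ against the target $N_2\bar\rho^\alpha$. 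The higher estimate $\mathcal{E}_{u,2}$ is obtained in parallel by differentiating the momentum equation in $t$ and testing with $u_t$; the new ingredient is the elliptic $H^2$-bound from \eqref{ellip}, which lets us trade $\|\nabla^2 u\|_{L^2}$ for $\bar\rho^{-\alpha}\|\sqrt\rho u_t\|_{L^2}$ plus lower-order terms.

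For $\mathcal{E}_{\rho,2}$ I would apply $\nabla$ to the continuity equation and test with $|\nabla\rho|^{q-2}\nabla\rho$, obtaining a Gronwall inequality for $\|\nabla\rho\|_{L^q}$ with coefficient $\|\nabla u\|_{L^\infty}$ and forcing $\bar\rho\,\|\nabla^2 u\|_{L^q}$. The $W^{1,q}$-estimate for the Lam\'e operator \eqref{ellip} carries the dangerous pressure contribution $\gamma\bar\rho^{\gamma-\alpha-1}\|\nabla\rho\|_{L^q}$, which after division by $\mu\bar\rho^\alpha$ yields a multiplier $\bar\rho^{\gamma-2\alpha-1}$; the condition $\alpha>\gamma$ from \eqref{ccc} is what guarantees that the dissipative integral with the claimed weight $\bar\rho^{\gamma-\alpha}$ does its job. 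Interpolating between the $L^\gamma$ control of $\rho-\bar\rho$ (from the basic energy inequality) and the $L^q$ control of $\nabla\rho$ produces the exponent $\beta=\max\{3-\gamma,0\}$ of \eqref{b}. Finally, $\mathcal{E}_{\rho,1}$ follows by integrating the continuity equation along characteristics: $\|\rho-\bar\rho\|_{L^\infty}(t)\le\|\rho_0-\bar\rho\|_{L^\infty}+C\bar\rho\int_0^T\|\dv u\|_{L^\infty}\,dt$, and the time integral is bounded via Gagliardo--Nirenberg interpolation of $\|\nabla u\|_{L^2}$ and $\|\nabla^2 u\|_{L^q}$, both already controlled.

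The main obstacle will be closing the loop between the density gradient and the pressure-driven term in the Lam\'e equation: the estimate on $\|\nabla\rho\|_{L^q}$ depends on $\|\nabla^2 u\|_{L^q}$, which itself depends on $\|\nabla\rho\|_{L^q}$ through the pressure, and the scheme only closes when the resulting net power of $\bar\rho$ is strictly negative. Tracking this power through Gagliardo--Nirenberg interpolation with exponents depending on $q$ is exactly what produces the second half of \eqref{ccc}, namely $\alpha>\frac{5q}{2(4q-3)}(\gamma-1)$; checking that each of the four bounds genuinely improves under both conditions simultaneously is the delicate bookkeeping step that will fix $\Lambda_0$.
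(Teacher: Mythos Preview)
Your overall architecture (bootstrap, the order $\mathcal{E}_{u,1}\to\mathcal{E}_{u,2}\to\mathcal{E}_{\rho,2}\to\mathcal{E}_{\rho,1}$, testing $\eqref{cns}_2$ with $u_t$, differentiating in $t$, and differentiating the continuity equation) matches the paper. But two steps contain genuine gaps.

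First, your treatment of $\mathcal{E}_{\rho,2}$ misses the key structural point. You propose to use $\bar\rho\,\|\nabla^2 u\|_{L^q}$ as the forcing in the Gronwall for $\|\nabla\rho\|_{L^q}$, and then observe that the pressure piece of the Lam\'e estimate feeds back a factor $\bar\rho^{\gamma-\alpha}\|\nabla\rho\|_{L^q}$; you then invoke ``$\alpha>\gamma$ from \eqref{ccc}'' to make this small. But \eqref{ccc} does \emph{not} say $\alpha>\gamma$ --- the whole point of the proposition is that $\alpha>\gamma$ is not required. The paper avoids this feedback loop by writing $\rho\,\nabla\operatorname{div}u=\frac{\rho}{2\mu+\lambda}(\nabla F-\nabla\mathcal{P})$ via the effective viscous flux $F=(2\mu+\lambda)\operatorname{div}u-\mathcal{P}$. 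The $\nabla\mathcal{P}$ part, when tested against $|\nabla\rho|^{q-2}\nabla\rho$, has a \emph{sign} and produces the damping term $\bar\rho^{\gamma-\alpha}\|\nabla\rho\|_{L^q}^2$ on the left-hand side (which is \emph{stronger}, not weaker, when $\gamma>\alpha$). The genuine forcing is then $\bar\rho\|\nabla F\|_{L^q}$, and $\|\nabla F\|_{L^q}\le C\|H\|_{L^q}$ contains no pressure contribution at all. This is what makes the exponent $\beta=\max\{3-\gamma,0\}$ appear: it comes from $\bar\rho^{2-\alpha-\gamma}\int_0^T\|\rho u_t\|_{L^q}^2\,dt\le C\bar\rho^{3-\gamma}$ in the Gronwall, not from an interpolation as you suggest.

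Second, your plan for $\mathcal{E}_{\rho,1}$ via characteristics cannot close uniformly in $T$. Integrating along trajectories gives a bound involving $\int_0^T\|\operatorname{div}u\|_{L^\infty}\,dt$, but the available a priori information is only $\int_0^T\|\nabla u\|_{L^\infty}^2\,dt\le C\bar\rho^{\gamma-\alpha}$; passing from $L^2_t$ to $L^1_t$ costs a factor $T^{1/2}$, which destroys the global estimate. The paper instead uses the Gagliardo--Nirenberg interpolation
\[
\|\rho-\bar\rho\|_{L^\infty}\le C\,\|\rho-\bar\rho\|_{L^\gamma}^{\frac{(q-3)\gamma}{3q+(q-3)\gamma}}\,\|\nabla\rho\|_{L^q}^{\frac{3q}{3q+(q-3)\gamma}},
\]
which involves only $\sup_t$ quantities already controlled by the basic energy and by $\mathcal{E}_{\rho,2}$, and yields a net power of $\bar\rho$ strictly below $1$ precisely because $\beta<2$.
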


First, we have the following basic energy estimate.
\begin{lemma}
	Under the assumption that 
    $$\mathcal{E}_{\rho,1}(T) \le \frac{\bar\rho}{2},$$ 
    then it holds that
	\begin{equation}\label{ee}
		\sup_{0\le t\le T} \left(\bar \rho \| u\|_{L^2}^2  + \|\rho- 
            \bar \rho \|_{L^\gamma}^\gamma \right) + \bar\rho^{\alpha} \int_{0}^{T} \|\nabla u\|_{L^2}^2 dt \le C\bar\rho,
	\end{equation}
    with $C=C(a,\lambda,\mu, \alpha,\gamma, \|u_0\|_{L^2}, \|\rho_0-\bar \rho\|_{L^{\gamma}})$.
\end{lemma}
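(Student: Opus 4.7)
The plan is to derive the classical energy identity for \eqref{cns} with the relative-pressure potential and then read off the three pieces on the left using the uniform pointwise comparability of $\rho$ to $\bar\rho$. I would introduce
\[
\Pi(\rho) := \frac{a}{\gamma-1}\bigl(\rho^\gamma - \bar\rho^\gamma - \gamma \bar\rho^{\gamma-1}(\rho - \bar\rho)\bigr),
\]
which is nonnegative, vanishes together with its derivative at $\rho=\bar\rho$, and satisfies the algebraic identity $\rho\Pi'(\rho) - \Pi(\rho) = P(\rho) - P(\bar\rho)$ by direct computation; this is exactly what is needed to process the pressure work. Taking the $L^2$ inner product of the momentum equation with $u$, integrating by parts, and using $(\ref{cns})_1$ then yields the standard identity
\[
\frac{d}{dt}\int\Bigl(\tfrac{1}{2}\rho|u|^2 + \Pi(\rho)\Bigr)\,dx + \int\bigl(2\mu(\rho)|\mathcal{D}u|^2 + \lambda(\rho)(\dv u)^2\bigr)\,dx = 0.
\]

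Next I would extract each target term using $\bar\rho/2 \le \rho \le 3\bar\rho/2$. The kinetic contribution satisfies $\tfrac12\rho|u|^2 \ge \tfrac{\bar\rho}{4}|u|^2$, producing the $\bar\rho\|u\|_{L^2}^2$ piece. For the dissipation, the pointwise bound $(\dv u)^2 \le 3|\mathcal D u|^2$ together with $2\mu+3\lambda \ge 0$ gives $2\mu(\rho)|\mathcal D u|^2 + \lambda(\rho)(\dv u)^2 \ge c(\mu,\lambda)\rho^\alpha|\mathcal D u|^2 \ge c\bar\rho^\alpha|\mathcal D u|^2$, and the identity $\int|\mathcal D u|^2\, dx = \tfrac12 \int(|\nabla u|^2 + (\dv u)^2)\, dx \ge \tfrac12\int |\nabla u|^2\, dx$ (integration by parts, using that $u$ decays at infinity) delivers the $\bar\rho^\alpha\int_0^T\|\nabla u\|_{L^2}^2\, dt$ piece after time integration. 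For the potential contribution, a Taylor expansion gives $\Pi(\rho) \sim \bar\rho^{\gamma-2}(\rho-\bar\rho)^2$ uniformly on $[\bar\rho/2,3\bar\rho/2]$, and combining with the pointwise bound $|\rho-\bar\rho|\le \bar\rho/2$ yields $\Pi(\rho) \gtrsim |\rho-\bar\rho|^\gamma$: directly when $\gamma \ge 2$ via $|\rho-\bar\rho|^\gamma \le (\bar\rho/2)^{\gamma-2}(\rho-\bar\rho)^2$, and through an $L^\infty$-interpolation when $\gamma < 2$.

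Finally I would integrate in time and bound the initial energy. The initial kinetic piece is immediate, $\int\tfrac12\rho_0|u_0|^2\, dx \le \tfrac{5\bar\rho}{8}\|u_0\|_{L^2}^2 \le C\bar\rho$, which is the source of the $C\bar\rho$ on the right; the initial potential piece is bounded by $C\|\rho_0-\bar\rho\|_{L^\gamma}^\gamma$ via the same Taylor/$L^\infty$ interpolation used for the running estimate, applied in the reverse direction. The main obstacle I expect is precisely this two-sided comparison between $\int\Pi(\rho)\,dx$ and $\|\rho-\bar\rho\|_{L^\gamma}^\gamma$: the Taylor coefficient $\bar\rho^{\gamma-2}$ conspires with the $L^\infty$ bound $|\rho-\bar\rho|\le \bar\rho/2$ differently depending on the sign of $\gamma - 2$, so one has to split cases and use $\bar\rho \ge 1$ to absorb the $\bar\rho^{\gamma-2}$ factor in the delicate direction; all the other steps are textbook.
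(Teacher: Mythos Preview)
Your proposal is correct and follows essentially the same route as the paper. Your potential $\Pi(\rho)$ is exactly the paper's $G(\rho)=\rho\int_{\bar\rho}^{\rho}\frac{P(s)-P(\bar\rho)}{s^2}\,ds$ written in closed form, and the energy identity you state coincides with theirs; the paper then simply asserts $G(\rho)\sim(\rho-\bar\rho)^\gamma$ and invokes $\bar\rho\ge 1$, whereas you spell out the Taylor/$L^\infty$ case analysis behind that comparability. The only stylistic difference is in the dissipation step: the paper bounds $\rho^\alpha$ below by $(\bar\rho/2)^\alpha$ and then applies the integrated identity $\int 2|\mathcal{D}u|^2=\int(|\nabla u|^2+(\dv u)^2)$ to obtain $\frac{\mu}{2^\alpha}\bar\rho^\alpha\|\nabla u\|_{L^2}^2+\frac{\mu+\lambda}{2^\alpha}\bar\rho^\alpha\|\dv u\|_{L^2}^2$, while you first use the pointwise inequality $(\dv u)^2\le 3|\mathcal{D}u|^2$ and only afterwards pass to the Korn identity. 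Both arguments are standard; note that your pointwise route degenerates (with $c=0$) in the borderline case $2\mu+3\lambda=0$, so for that edge case the paper's ordering---lower-bound $\rho^\alpha$ first, then integrate by parts---is cleaner.
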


\begin{proof}
	First, it follows from \eqref{a1} that 
    \begin{equation}\label{upbd}
        \frac{\bar \rho }{2} \le \rho \le \frac{3\bar \rho}{2}.
    \end{equation}
    Then, multiplying \eqref{cns}$_2$ by $u$ and integrating the resultant equation, we obtain after using \eqref{cns}$_1$ and integration by parts that 
    \begin{equation}
        \begin{aligned}
            \frac{d}{dt} \left(\frac{1}{2}\|\sqrt{\rho} u\|_{L^2}^2+ \|G(\rho)\|_{L^1}\right) + \frac{\mu}{2^\alpha} \bar \rho^{\alpha}\|\nabla u\|_{L^2}^2 + \frac{\mu+\lambda}{2^\alpha} \bar \rho^{\alpha}\|\dv u\|_{L^2}^2\\
            \le \frac{d}{dt} \left(\frac{1}{2}\|\sqrt{\rho} u\|_{L^2}^2+ \|G(\rho)\|_{L^1}\right) + \int \left(
            2\mu \rho^{\alpha}|\mathcal{D} u|^2 + \lambda  \rho^{\alpha}(\dv u)^2\right) dx=0,
        \end{aligned}
    \end{equation}
    where 
    \begin{equation*}
        G(\rho)=\rho \int_{\bar \rho }^{\rho} \frac{P(s)-P(\bar\rho)}{s^2}ds.
    \end{equation*}
    Integrating the above inequality over $(0,T]$ leads to 
    \begin{equation}
        \begin{aligned}
            \sup_{0\le t\le T} \left(\bar \rho \| u\|_{L^2}^2  + \|\rho- 
            \bar \rho \|_{L^\gamma}^\gamma \right) + \bar\rho^{\alpha} \int_{0}^{T} \|\nabla u\|_{L^2}^2 dt \le C(\bar\rho \| u_0\|_{L^2}^2 + \|\rho_0- 
            \bar \rho \|_{L^\gamma}^\gamma),
        \end{aligned}
    \end{equation}
    where we have used 
    \begin{equation*}
        G(\rho) \sim (\rho-\bar\rho)^\gamma.
    \end{equation*}
    for $\bar \rho \ge 1$, \eqref{ee} follows directly. 
\end{proof}

Next, rewrite \eqref{cns}$_2$ as 
\begin{equation}\label{elliptic}
    \mu \Delta u +(\mu+\lambda) \nabla \dv u =  \nabla \mathcal{P}+H,
\end{equation}
with
\begin{equation*}
    \nabla \mathcal{P}=
        \begin{cases}
            \frac{a\gamma}{\gamma-\alpha} \nabla \rho^{\gamma-\alpha}, \quad \gamma \neq \alpha,\\
            a\gamma \nabla \ln \rho, \quad \gamma=\alpha,
        \end{cases}
\end{equation*}
and
$$H \triangleq \rho^{1-\alpha}\dot u
+\alpha\rho^{-1}\nabla\rho\cdot(2\mu \mathcal{D}u +\lambda \dv u).$$
Define the efficient viscous flux $F$ and vorticity $w$ respect to \eqref{elliptic} as
\begin{equation}\label{flux}
    F=(2\mu+\lambda) \dv u +  \mathcal{P}(\rho)-\mathcal{P}(\bar \rho),\quad w=\nabla \times u,
\end{equation}
then \eqref{elliptic} implies
\begin{gather}
    \label{f1}-\Delta F=\dv H,\\
    -\mu \Delta w =\nabla \times H.
\end{gather}

According to the standard $L^2$-estimate of the elliptic system, one has after using \eqref{upbd}, \eqref{ee}, and Sobolev's inequality in Lemma \ref{G-N},
\begin{equation}\label{e1}
    \begin{aligned}
        \|\nabla u\|_{L^6} \le& C (\|F\|_{L^6} + \|w\|_{L^6}+ \|\mathcal{P}(\rho)-\mathcal{P}(\bar \rho)\|_{L^6}) \\
        \le & C( \|\nabla F\|_{L^2} + \|\nabla w\|_{L^2}+ \bar \rho^{-\alpha}\|P(\rho)-{P}(\bar \rho)\|_{L^6} )\\
        \le & C \left(\bar\rho^{-\alpha+\frac12}\|\sqrt{\rho}u_t\|_{L^2}+ \bar\rho^{-\alpha+1} \|\nabla u\|_{L^2}^{\frac{3}{2}} \|\nabla u\|_{L^6}^{\frac{1}{2}} + \bar\rho^{-1} \|\nabla \rho \|_{L^q} \|\nabla u\|_{L^{2}}^{\frac{q-3}{q}} \|\nabla u\|_{L^6}^{\frac{3}{q}} \right) \\
        &+ C \bar \rho^{-\alpha}\|P(\rho)-{P}(\bar \rho)\|_{L^1}^{\frac{3(q-2)}{2(4q-3)}} \|\nabla P(\rho)\|_{L^q}^{\frac{5q}{2(4q-3)}}  \\
        \le & \frac{1}{2} \|\nabla u\|_{L^6} + C \left( \bar\rho^{-\alpha+\frac12}\|\sqrt{\rho}u_t\|_{L^2}+ \bar\rho^{-2\alpha+2} \|\nabla u\|_{L^2}^{3}  + \bar\rho^{-\frac{q}{q-3}} \|\nabla \rho \|_{L^q}^{\frac{q}{q-3}} \|\nabla u\|_{L^{2}} \right)\\
        &+ C \bar \rho^{-\alpha+\frac{3(q-2)}{2(4q-3)} + (\gamma-1) \frac{5q}{2(4q-3)} } \|\nabla \rho\|_{L^q}^{\frac{5q}{2(4q-3)}},
    \end{aligned}
\end{equation}
where we have used  
\begin{equation}\label{e2}
    \begin{aligned}
        &\|\nabla F\|_{L^2} + \|\nabla w\|_{L^2} \le C \|H\|_{L^2}\\
        \le& C \bar\rho^{-\alpha}\|-\rho(u_t+u\cdot \nabla u)+2\mu \nabla \rho^\alpha \cdot \mathcal{D}u +\lambda \nabla \rho^{\alpha} \dv u\|_{L^2}\\
        \le & C \left(\bar\rho^{-\alpha+\frac12}\|\sqrt{\rho}u_t\|_{L^2}+ \bar\rho^{-\alpha+1} \|\nabla u\|_{L^2}^{\frac{3}{2}} \|\nabla u\|_{L^6}^{\frac{1}{2}} + \bar\rho^{-1} \|\nabla \rho \|_{L^q} \|\nabla u\|_{L^{\frac{2q}{q-2}}} \right)\\
        \le & C \left( \bar\rho^{-\alpha+\frac12}\|\sqrt{\rho}u_t\|_{L^2}+ \bar\rho^{-\alpha+1} \|\nabla u\|_{L^2}^{\frac{3}{2}} \|\nabla u\|_{L^6}^{\frac{1}{2}} + \bar\rho^{-1} \|\nabla \rho \|_{L^q} \|\nabla u\|_{L^{2}}^{\frac{q-3}{q}} \|\nabla u\|_{L^6}^{\frac{3}{q}} \right).
    \end{aligned}
\end{equation}

Note that by Sobolev's inequality,
    \begin{equation}\label{rho1}
        \begin{aligned}
        \|\nabla u\|_{L^\infty} \le & C \|\nabla u\|_{L^2}^{\theta} \|\nabla^2 u\|_{L^q}^{1-\theta}\\
        \end{aligned}
    \end{equation}
    where 
    \begin{equation}\label{theta}
        \theta=\frac{2(q-3)}{5q-6}.
    \end{equation}
    
    Similarly, the standard $L^p$-estimate of the elliptic system gives that
    \begin{equation}\label{e-p}
        \begin{aligned}
         \|\nabla^2 u\|_{L^q} \le & C( \|\nabla F\|_{L^q} + \|\nabla w\|_{L^q}  + \|\nabla \mathcal{P}\|_{L^q})\\
        \le& C \left(\bar\rho^{-\alpha}\|\rho u_t\|_{L^q} + \bar \rho^{\gamma-\alpha-1}\|\nabla \rho \|_{L^q}\right)\\
        & +C \left( \bar\rho^{-\alpha+1}  \| u\|_{L^q} \| \nabla u\|_{L^\infty} + \bar\rho^{-1} \|\nabla \rho\|_{L^q} \| \nabla u\|_{L^\infty} \right)\\
        \le& C \left(\bar\rho^{-\alpha}\|\rho u_t\|_{L^q} + \bar \rho^{\gamma-\alpha-1}\|\nabla \rho \|_{L^q}\right)\\
        & + C \left( \bar\rho^{-\alpha+1}  \| u\|_{L^q} \|\nabla u\|_{L^2}^{\theta} \|\nabla^2 u\|_{L^q}^{1-\theta} + \bar\rho^{-1} \|\nabla \rho\|_{L^q} \|\nabla u\|_{L^2}^{\theta} \|\nabla^2 u\|_{L^q}^{1-\theta} \right)\\
        \le & \frac{1}{2} \|\nabla^2 u\|_{L^q} + C \left(\bar\rho^{-\alpha}\|\rho u_t\|_{L^q}  + \bar \rho^{\gamma-\alpha-1}\|\nabla \rho \|_{L^q}\right)\\
         & +  C \left( \bar\rho^{\frac{-\alpha+1}{\theta}}  \| u\|_{L^q}^{\frac{1}{\theta}} \|\nabla u\|_{L^2} + \bar\rho^{-\frac{1}{\theta}} \|\nabla \rho\|_{L^q}^{\frac{1}{\theta}} \|\nabla u\|_{L^2} \right),
        \end{aligned}
    \end{equation}
    where we have used 
    \begin{equation}\label{e3}
    \begin{aligned}
        &\|\nabla F\|_{L^q} + \|\nabla w\|_{L^q} \le C \|H\|_{L^q}\\
        \le& C \bar\rho^{-\alpha}\|-\rho(u_t+u\cdot \nabla u)+2\mu \nabla \rho^\alpha \cdot \mathcal{D}u +\lambda \nabla \rho^{\alpha} \dv u\|_{L^q}\\
        \le& C \left(\bar\rho^{-\alpha}\| \rho u_t\|_{L^q}  + \bar\rho^{-\alpha+1}  \| u\|_{L^q} \| \nabla u\|_{L^\infty} + \bar\rho^{-1} \|\nabla \rho\|_{L^q} \| \nabla u\|_{L^\infty}\right).
    \end{aligned}
    \end{equation}

As a consequence, we have the following estimates which will be used frequently.

\begin{lemma}\label{lem1}
    Let 
    \begin{equation}\label{c1}
        \alpha>1,\quad  \frac{b}{2} \le \min\left\{1, \frac{2(4q-3)}{5q} \alpha - \frac{3(q-2)}{5q} - (\gamma-1), \frac{11q-12}{10q}(\alpha-1) +\frac{3-\gamma}{2}\right\}.
    \end{equation}
    Under the following assumption  
    \begin{equation}\label{a11}
    \begin{gathered}
        \mathcal{E}_{\rho,1}(T) \le \frac{\bar\rho}{2}, \quad\mathcal{E}_{\rho,2}(T)  \le  2 N_1 \bar\rho^{b}, \\
		\mathcal{E}_{u,1}(T) \le 2^{\alpha+2} N_2 \bar\rho^{\alpha},
        \quad \mathcal{E}_{u,2}(T) \le 2 N_3 \bar\rho^{2\alpha-1},
    \end{gathered}
    \end{equation}
    with $N_i(i=1,2,3)$ defined in Proposition \ref{pr}, then
    it holds that 
    \begin{gather}\label{key} 
        \sup_{0\leq t\leq T}\|\nabla u\|_{L^6} \le C,\\
        \label{key1}\int_{0}^{T}  \|\nabla u\|_{L^6}^3 dt \le 
        C,  
    \end{gather}
    where $C=C(a,\lambda,\mu, \alpha,\gamma, \|u_0\|_{H^2}, \|\rho_0-\bar \rho\|_{L^{\gamma}}, \|\nabla \rho_0\|_{L^q})$.
\end{lemma}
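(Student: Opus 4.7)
My plan is to use the already-derived elliptic estimate \eqref{e1}, substitute the a priori bounds \eqref{a11}, and verify that each term has a non-positive exponent of $\bar\rho$.

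First, I read off from \eqref{a11} the three uniform bounds
\begin{equation*}
\sup_{[0,T]}\|\sqrt\rho u_t\|_{L^2}\le C\bar\rho^{\alpha-\frac{1}{2}},\qquad
\sup_{[0,T]}\|\nabla u\|_{L^2}\le C,\qquad
\sup_{[0,T]}\|\nabla\rho\|_{L^q}\le C\bar\rho^{b/2},
\end{equation*}
coming from $\mathcal E_{u,2}$, $\mathcal E_{u,1}$, and $\mathcal E_{\rho,2}$ respectively. Inserting these into the four terms on the right-hand side of \eqref{e1} converts each one into a pure power of $\bar\rho$. Two of the four exponents are non-positive from $\alpha>1$ alone; the third forces $b/2\le 1$, and the fourth forces $b/2\le \frac{2(4q-3)}{5q}\alpha-\frac{3(q-2)}{5q}-(\gamma-1)$. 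Together these are the first two inequalities of \eqref{c1}, and \eqref{key} follows.

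For the time integral \eqref{key1}, I would cube \eqref{e1} and integrate term by term. The first term uses $\int\|\sqrt\rho u_t\|_{L^2}^3\,dt\le\sup\|\sqrt\rho u_t\|_{L^2}\cdot\int\|\sqrt\rho u_t\|_{L^2}^2\,dt$ together with the bound $\int\|\sqrt\rho u_t\|_{L^2}^2\,dt\le C\bar\rho^\alpha$ from $\mathcal E_{u,1}$. The second and third use the dissipation estimate $\int\|\nabla u\|_{L^2}^2\,dt\le C\bar\rho^{1-\alpha}$ from the basic energy identity \eqref{ee}. The fourth carries a power of $\|\nabla\rho\|_{L^q}$ exceeding $2$ on the range $3<q<6$, so I split
\[
\|\nabla\rho\|_{L^q}^{\frac{15q}{2(4q-3)}}
=\|\nabla\rho\|_{L^q}^{\frac{12-q}{2(4q-3)}}\,\|\nabla\rho\|_{L^q}^2,
\]
bound the first factor by its supremum, and activate the weighted time integral $\bar\rho^{\gamma-\alpha}\int\|\nabla\rho\|_{L^q}^2\,dt\le C\bar\rho^b$ provided by $\mathcal E_{\rho,2}$. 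Gathering the resulting exponent of $\bar\rho$ and requiring non-positivity gives exactly the third inequality of \eqref{c1}.

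The main obstacle is bookkeeping rather than analytical: each of the four terms in \eqref{e1} contributes both a pointwise and a time-integrated exponent of $\bar\rho$, and the three conditions in \eqref{c1} must align precisely with the worst of these eight contributions. The only delicate choice is in splitting the super-quadratic power of $\|\nabla\rho\|_{L^q}$ for the fourth term; pulling out a full $\|\nabla\rho\|_{L^q}^2$ factor is what lets the weight $\bar\rho^{\gamma-\alpha}$ inside $\mathcal E_{\rho,2}$ contribute the $(3-\gamma)/2$ piece that appears on the right of the third inequality in \eqref{c1}.
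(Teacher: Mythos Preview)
Your proof of \eqref{key} is identical to the paper's: substitute the bounds from \eqref{a11} into \eqref{e1} and check the exponents. For \eqref{key1} you take a slightly different route. The paper does \emph{not} cube \eqref{e1}; instead it raises \eqref{e1} to the power $p=\frac{4(4q-3)}{5q}\in(2,3)$, chosen precisely so that the exponent of $\|\nabla\rho\|_{L^q}$ in the fourth term becomes exactly $2$, integrates to bound $\int_0^T\|\nabla u\|_{L^6}^p\,dt$, and then combines with the sup bound \eqref{key} to upgrade to the $L^3$-in-time estimate. With that choice of power, the non-positivity condition on the fourth term is \emph{literally} the third inequality in \eqref{c1}. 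Your direct cubing also works, but your final sentence is not quite accurate: the exponent you obtain for the fourth term is not the third inequality on the nose. Writing $A=-\alpha+\frac{3(q-2)}{2(4q-3)}+(\gamma-1)\frac{5q}{2(4q-3)}$, your exponent minus the paper's is $(12-q)\bigl[\frac{A}{5q}+\frac{b}{4(4q-3)}\bigr]$, and it is the \emph{second} inequality in \eqref{c1} (equivalent to $A+\frac{5qb}{4(4q-3)}\le 0$) that makes this difference non-positive. So your constraint is implied by the second and third inequalities together, not by the third alone. The argument is correct; just adjust the claim about which hypothesis is doing the work.
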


\begin{proof}
    Combining \eqref{e1} with \eqref{a11} gives
    \begin{equation}
        \begin{aligned}
            \|\nabla u\|_{L^6} \le& C \left( \bar\rho^{-\alpha+\frac12}\|\sqrt{\rho}u_t\|_{L^2}+ \bar\rho^{-2\alpha+2} \|\nabla u\|_{L^2}^{3}  + \bar\rho^{-\frac{q}{q-3}} \|\nabla \rho \|_{L^q}^{\frac{q}{q-3}} \|\nabla u\|_{L^{2}} \right)\\
        &+ C \bar \rho^{-\alpha+\frac{3(q-2)}{2(4q-3)} + (\gamma-1) \frac{5q}{2(4q-3)} } \|\nabla \rho\|_{L^q}^{\frac{5q}{2(4q-3)}}\\
        \le &C + C \bar\rho^{-2\alpha+2}  + C \bar\rho^{-\frac{q}{q-3}\left(1-\frac{b}{2}\right)}+ C \rho^{-\alpha+\frac{3(q-2)}{2(4q-3)} + (\gamma-1+\frac{b}{2}) \frac{5q}{2(4q-3)} } \le C,
        \end{aligned}
    \end{equation}
    since $\alpha>1, b\le 2,$ and $-\alpha+\frac{3(q-2)}{2(4q-3)} + (\gamma-1+\frac{b}{2}) \frac{5q}{2(4q-3)}\le 0,$ due to \eqref{c1}.

    Next, it follows from \eqref{e1} and \eqref{a11} that 
    \begin{equation}
    \begin{aligned}
        \int_{0}^{T} \|\nabla u\|_{L^6}^{\frac{4(4q-3)}{5q}} dt  
        \le & C \bar\rho^ {\frac{2(4q-3)(-2\alpha+1)}{5q}}  \int_{0}^{T} \|\sqrt{\rho}u_t\|_{L^2}^{\frac{4(4q-3)}{5q}} dt + C  \int_{0}^{T}  \|\nabla u\|_{L^2}^{\frac{4(4q-3)}{5q}} dt \\
        & + C \bar \rho^{\{-\alpha+\frac{3(q-2)}{2(4q-3)} + (\gamma-1) \frac{5q}{2(4q-3)} \}{\frac{4(4q-3)}{5q}}} \int_{0}^{T} \|\nabla \rho\|_{L^q}^2 dt\\
        \le & C \bar\rho^{{\frac{2(4q-3)(-2\alpha+1)}{5q}} } \sup_{0\le t\le T} \|\sqrt{\rho}u_t\|_{L^2}^{\frac{6(q-2)}{5q}} \int_{0}^{T} \|\sqrt{\rho}u_t\|_{L^2}^{2} dt + C  \bar\rho^{-\alpha+1} \\
        & + C \bar \rho^{\{-\alpha+\frac{3(q-2)}{2(4q-3)} + (\gamma-1) \frac{5q}{2(4q-3)} \}{\frac{4(4q-3)}{5q}}+b-\gamma+\alpha} \\
        \le & C \bar\rho^{{\frac{2(4q-3)(-2\alpha+1)}{5q}} + \alpha + \frac{3(q-2)}{5q}(2\alpha-1)}   + C  \bar\rho^{-\alpha+1} \\
        & + C \bar \rho^{\{-\alpha+\frac{3(q-2)}{2(4q-3)} + (\gamma-1) \frac{5q}{2(4q-3)} \}{\frac{4(4q-3)}{5q}}+b-\gamma+\alpha} \\
        \le & C \left(\bar\rho^{-\alpha+1} + \bar \rho^{\{-\alpha+\frac{3(q-2)}{2(4q-3)}  \}{\frac{4(4q-3)}{5q}}+b+\gamma-2+\alpha}\right)\\
        \le & C \left(\bar\rho^{-\alpha+1} + \bar \rho^{\frac{-11q+12}{5q}(\alpha-1) +b+\gamma-3}\right)\le C,
    \end{aligned}
\end{equation}
since $\alpha>1, b<2,$ and $\frac{-11q+12}{5q}(\alpha-1) +b+\gamma-3\le 0,$ due to \eqref{c1}.
    In particular, since $2<\frac{4(4q-3)}{5q}< 3$ for $q\in(3,6)$, \eqref{key} holds. 
\end{proof}

Now, let's concentrate on the estimate of velocities.
\begin{lemma}\label{A3}
    Under the following assumption:
    $$\alpha>\max\left\{1,\frac{\gamma+1}{2}\right\}, \quad b<2,$$
    there exists a positive constant $\Lambda_1=\Lambda_1(a,\lambda,\mu, \alpha,\gamma, \|u_0\|_{H^2}, \|\rho_0-\bar \rho\|_{L^{\gamma}}, \|\nabla \rho_0\|_{L^q})$ such that 
    \begin{equation}
        \mathcal{E}_{u,1}(T)\le  2^{\alpha+1} N_2\bar\rho^{\alpha},
    \end{equation}
    provided $\bar \rho >\Lambda_1$.
    Here, $N_2$ is defined in \eqref{m1}. 

\end{lemma}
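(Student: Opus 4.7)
The natural approach is the classical $u_t$-multiplier energy estimate, with careful bookkeeping of powers of $\bar\rho$. I would multiply the momentum equation \eqref{cns}$_2$ by $u_t$, integrate over $\mathbb{R}^3$, and integrate by parts on the viscous side. Applying the continuity equation to rewrite the time derivative $(\rho^\alpha)_t=-\alpha\rho^\alpha\dv u-u\cdot\nabla\rho^\alpha$ that appears when $\partial_t$ acts on the viscous quadratic form, one arrives at
\begin{equation*}
\frac{d}{dt}\!\int\!\!\Bigl(\mu\rho^\alpha|\mathcal{D}u|^2+\tfrac{\lambda}{2}\rho^\alpha(\dv u)^2\Bigr)dx+\int\rho|u_t|^2\,dx=J_{\mathrm{conv}}+J_{\mathrm{pres}}+J_{\mathrm{visc}},
\end{equation*}
with $J_{\mathrm{conv}}=-\int\rho u\cdot\nabla u\cdot u_t$, $J_{\mathrm{pres}}=-\int\nabla P\cdot u_t$, and $J_{\mathrm{visc}}=\frac12\int(\rho^\alpha)_t(2\mu|\mathcal{D}u|^2+\lambda(\dv u)^2)$.

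For the convective term, Cauchy--Schwarz, $\|u\|_{L^6}\le C\|\nabla u\|_{L^2}$, the interpolation $\|\nabla u\|_{L^3}\le C\|\nabla u\|_{L^2}^{1/2}\|\nabla u\|_{L^6}^{1/2}$, and the already proved uniform bound $\|\nabla u\|_{L^6}\le C$ from Lemma~\ref{lem1} give $|J_{\mathrm{conv}}|\le \frac18\|\sqrt\rho u_t\|_{L^2}^2+C\bar\rho\|\nabla u\|_{L^2}^3$. The viscous commutator $J_{\mathrm{visc}}$ is handled similarly: the $\rho^\alpha\dv u\cdot|\nabla u|^2$ piece is estimated by $C\bar\rho^\alpha\|\nabla u\|_{L^3}^3\le C\bar\rho^\alpha\|\nabla u\|_{L^2}^{3/2}$ via Lemma~\ref{lem1}, while the piece containing $\nabla\rho$ is controlled in $L^q$ using the bootstrap assumption $\mathcal{E}_{\rho,2}(T)\le 2N_1\bar\rho^{b}$ with $b<2$, so that after H\"older and Gagliardo--Nirenberg one picks up at worst a factor $\bar\rho^{\alpha+(b-2)/2+\epsilon}$, strictly subcritical in $\bar\rho^\alpha$.

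The crucial term is $J_{\mathrm{pres}}$, which I would integrate by parts in both space and time:
\begin{equation*}
J_{\mathrm{pres}}=\frac{d}{dt}\!\int(P-P(\bar\rho))\dv u\,dx-\!\int P_t\dv u\,dx,
\end{equation*}
and move the total time derivative into the LHS. Using the uniform bound $|\rho-\bar\rho|\le\bar\rho/2$ together with the energy estimate \eqref{ee}, interpolation yields $\|P-P(\bar\rho)\|_{L^2}\le C\bar\rho^{(\gamma+1)/2}$, whence Young's inequality produces
\begin{equation*}
\Bigl|\!\int(P-P(\bar\rho))\dv u\,dx\Bigr|\le \frac{\mu\bar\rho^\alpha}{2^{\alpha+2}}\|\nabla u\|_{L^2}^2+C\bar\rho^{\gamma+1-\alpha}.
\end{equation*}
Here the hypothesis $\alpha>(\gamma+1)/2$ is decisive: it guarantees $\bar\rho^{\gamma+1-\alpha}\ll\bar\rho^\alpha$, so this piece can truly be absorbed once $\bar\rho$ is large. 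The remaining $-\int P_t\dv u$ is expanded via $P_t=-\gamma P\dv u-u\cdot\nabla P$; the first piece contributes $\gamma\int P(\dv u)^2\le C\bar\rho^\gamma\|\nabla u\|_{L^2}^2$, whose time integral against \eqref{ee} is of order $\bar\rho^{\gamma+1-\alpha}\ll\bar\rho^\alpha$, and the second piece is handled by one further integration by parts combined with the bootstrap control of $\|\nabla\rho\|_{L^q}$.

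Collecting all of this into a differential inequality of the form $\frac{d}{dt}\tilde\Phi(t)+\frac12\|\sqrt\rho u_t\|_{L^2}^2\le C\bar\rho\|\nabla u\|_{L^2}^3+\bigl(\text{terms of order }\bar\rho^{\gamma+1-\alpha}\text{ or smaller}\bigr)$, where $\tilde\Phi(t)\sim \mu\bar\rho^\alpha\|\nabla u\|_{L^2}^2$, one integrates in time. The cubic term is absorbed via the basic estimate \eqref{ee}: under the bootstrap $\sup_t\|\nabla u\|_{L^2}\le C$, $\int_0^T\bar\rho\|\nabla u\|_{L^2}^3\le C\bar\rho\int_0^T\|\nabla u\|_{L^2}^2\le C\bar\rho^{2-\alpha}$, which is dominated by $\bar\rho^\alpha$ since $\alpha>1$. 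Defining $N_2$ to match the initial-data contribution $\tilde\Phi(0)\sim \mu\bar\rho^\alpha\|\nabla u_0\|_{L^2}^2$ and choosing $\bar\rho\ge\Lambda_1$ large enough to absorb every subcritical remainder into $2^\alpha N_2\bar\rho^\alpha$ yields the claim $\mathcal{E}_{u,1}(T)\le 2^{\alpha+1}N_2\bar\rho^\alpha$. The main obstacle I expect is precisely the pressure term: every integration by parts must yield a factor of $\bar\rho$ with exponent strictly below $\alpha$, which is what forces the strict inequality $\alpha>(\gamma+1)/2$ and what drives the final smallness requirement on $\bar\rho$.
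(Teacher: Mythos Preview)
Your approach is essentially the paper's: multiply \eqref{cns}$_2$ by $u_t$, pull out the pressure boundary term $\int(P-P(\bar\rho))\dv u$, and estimate $I_1,I_2,I_3$ (your $J_{\mathrm{conv}},J_{\mathrm{visc}},J_{\mathrm{pres}}$) with the same H\"older/interpolation tools and the bounds from Lemma~\ref{lem1}. The pressure boundary term is handled the same way---the paper bounds it directly by $C\bar\rho^{(\gamma+1)/2}$ while you split it with Young to get $C\bar\rho^{\gamma+1-\alpha}$; both are subcritical exactly under $\alpha>\frac{\gamma+1}{2}$.

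There is one slip in your treatment of $J_{\mathrm{visc}}$. You write $C\bar\rho^\alpha\|\nabla u\|_{L^3}^3\le C\bar\rho^\alpha\|\nabla u\|_{L^2}^{3/2}$, i.e.\ you discard the factor $\|\nabla u\|_{L^6}^{3/2}$ using the $L^\infty_t$ bound \eqref{key}. After that the time integral $\bar\rho^\alpha\int_0^T\|\nabla u\|_{L^2}^{3/2}\,dt$ is \emph{not} uniformly bounded in $T$: neither \eqref{ee} nor the bootstrap gives a $T$-independent bound on $\int_0^T\|\nabla u\|_{L^2}^{3/2}$. The paper keeps the $\|\nabla u\|_{L^6}^{3/2}$ factor and uses Cauchy--Schwarz in $t$ together with \eqref{key1},
\[
\bar\rho^\alpha\!\int_0^T\!\|\nabla u\|_{L^2}^{3/2}\|\nabla u\|_{L^6}^{3/2}\,dt
\le \bar\rho^\alpha\Bigl(\int_0^T\|\nabla u\|_{L^2}^{3}\Bigr)^{1/2}\Bigl(\int_0^T\|\nabla u\|_{L^6}^{3}\Bigr)^{1/2}\le C\bar\rho^{(\alpha+1)/2},
\]
which is the source of the $\bar\rho^{(\alpha+1)/2}$ term in the final inequality. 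The same remark applies to the $\nabla\rho$-piece of $J_{\mathrm{visc}}$. Once you retain the $L^6$ factor and invoke \eqref{key1} rather than \eqref{key}, your argument closes exactly as the paper's does.
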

\begin{proof}
    Multiplying \eqref{cns}$_2$ by $u_t$, and integrating by parts, we have that 
    \begin{equation}\label{k1}
        \begin{aligned}
            &\frac{d}{dt}\int \left(  \mu\rho^\alpha |\mathcal{D} u|^2+ \frac{\lambda \rho^\alpha}{2}(\dv u)^2\right)dx-\frac{d}{dt}\int  (P(\rho)-P(\bar\rho)) \dv u  dx +  \int\rho |u_t|^2dx \\
            &= -\int  \rho u \cdot \nabla u \cdot u_t dx+ \int \left( \mu(\rho^\alpha)_t |\mathcal{D}u|^2+ \frac{\lambda}{2}(\rho^\alpha)_t |\dv u|^2 \right)dx- \int P_t \dv u dx\\
            &=\sum_{i=1}^{3} I_i.
        \end{aligned}
    \end{equation}
    It follows from H\"older and Sobolev inequalities that 
    \begin{equation}
    \begin{aligned}
        I_1 \le & \frac{1}{2} \|\sqrt{\rho} u_t\|_{L^2}^2 + C \bar \rho \|u\|_{L^6}^2\|\nabla u\|_{L^3}^2\\
        \le & \frac{1}{2} \|\sqrt{\rho} u_t\|_{L^2}^2 + C \bar \rho \|\nabla u\|_{L^2}^3\|\nabla u\|_{L^6} \\
        \le & \frac{1}{2} \|\sqrt{\rho} u_t\|_{L^2}^2 + C \bar \rho \|\nabla u\|_{L^2}^2,
    \end{aligned}
    \end{equation}
    and 
    \begin{equation}
    \begin{aligned}
        I_2&\le C \bar \rho^{\alpha-1} \int  \left|\rho \dv u+ \nabla \rho \cdot u\right||\nabla u|^2 dx\\
        &\le C \bar \rho^\alpha \|\nabla u\|_{L^3}^3 + C \bar \rho^{\alpha-1} \|\nabla \rho \|_{L^q}\|u\|_{L^6} \|\nabla u\|_{L^{\frac{12q}{5q-6}}}^2\\
        &\le C\bar \rho^\alpha \|\nabla u\|_{L^2}^{\frac{3}{2}}\|\nabla u\|_{L^6}^{\frac{3}{2}} + C \bar \rho^{\alpha-1+\frac{b}{2}} \|\nabla u\|_{L^2}^{\frac{5q-6}{2q}} \|\nabla u\|_{L^6}^{\frac{q+6}{2q}}\\ &\le C\bar \rho^\alpha \|\nabla u\|_{L^2}^{\frac{3}{2}}\|\nabla u\|_{L^6}^{\frac{3}{2}} + C \bar \rho^{\alpha} \|\nabla u\|_{L^2}^{\frac{5q-6}{2q}} \|\nabla u\|_{L^6}^{\frac{q+6}{2q}},\\
    \end{aligned}
    \end{equation}
    and that 
    \begin{equation}
        \begin{aligned}
        I_3&\le C \bar \rho^{\gamma-1} \int  \left|\rho \dv u+ \nabla \rho \cdot u\right||\dv u| dx\\
        &\le C \bar \rho^\gamma \|\nabla u\|_{L^2}^2 + C \bar \rho^{\gamma-1} \|\nabla \rho \|_{L^q}\|u\|_{L^\frac{2q}{q-2}} \|\nabla u\|_{L^2}\\
        &\le C\bar \rho^\gamma \|\nabla u\|_{L^2}^{2}+ C \bar \rho^{\gamma-1} \|\nabla \rho \|_{L^q}\|u\|_{L^2}^{\frac{q-3}{q}} \|\nabla u\|_{L^2}^{\frac{3+q}{q}}\\
        &\le C\bar \rho^\gamma \|\nabla u\|_{L^2}^{2}+ C \bar \rho^{\gamma-1+\frac{b}{2}} \|\nabla u\|_{L^2}^2\\
        &\le C\bar \rho^\gamma \|\nabla u\|_{L^2}^{2},
        \end{aligned}
    \end{equation}
    where we have used \eqref{a11} and \eqref{key}.
    Substituting the above estimates into \eqref{k1}, integrating with respect to $t$, and using \eqref{ee} and \eqref{key1} give
    \begin{equation}
        \begin{aligned}
            & \frac{\bar \rho^\alpha}{2^{\alpha+1}}\sup_{0\le t\le T}\int \left(  \mu |\nabla u|^2+ (\mu+\lambda)(\dv u)^2\right)dx + \frac{1}{2} \int_{0}^{T}\|\sqrt{\rho} u_t\|_{L^2}^2 dt \\
            \le
            &\sup_{0\le t\le T}\int \left(  \mu\rho^\alpha |\mathcal{D} u|^2+ \frac{\lambda \rho^\alpha}{2}(\dv u)^2\right)dx + \frac{1}{2} \int_{0}^{T}\|\sqrt{\rho} u_t\|_{L^2}^2 dt \\
            \le&  N_2 2^\alpha \bar \rho^\alpha   + \sup_{0\le  t\le T} \int  (P(\rho)-P(\bar\rho)) \dv u  dx \\
            &+ C \bar \rho^\alpha \int_{0}^{T} \left(\|\nabla u\|_{L^2}^{\frac{3}{2}}\|\nabla u\|_{L^6}^{\frac{3}{2}} +  \|\nabla u\|_{L^2}^{\frac{5q-6}{2q}} \|\nabla u\|_{L^6}^{\frac{q+6}{2q}}\right) dt +  C  \bar \rho^{\gamma} \int_{0}^{T} \|\nabla u\|_{L^2}^{2} dt \\
            \le& N_2 2^\alpha \bar \rho^\alpha + C \bar\rho^{\frac{\gamma}{2}}\|P(\rho)-P(\bar\rho)\|_{L^1}^{\frac{1}{2}} \|\dv u\|_{L^2} +  C (\bar \rho^{\frac{\alpha+1}{2}}+ \bar \rho^{\gamma+1-\alpha}) \\
            \le&  N_2 2^\alpha \bar \rho^\alpha   +  C_1 (\bar\rho^{\frac{\gamma+1}{2}}+ \bar \rho^{\frac{\alpha+1}{2}}+ \bar \rho^{\gamma+1-\alpha})\\
            \le& N_2 2^{\alpha+1} \bar \rho^\alpha,
        \end{aligned}
    \end{equation}
    after defining 
    \begin{equation}\label{m1}
    N_2 \triangleq 2\mu \|\mathcal{D}(u_0) \|_{L^2}^2+\lambda\|\mathrm{div}u_0\|_{L^2}^2, 
    \end{equation}
    and choosing $\Lambda_1$ such that 
    $$\bar \rho \ge \Lambda_1\triangleq \max\left\{1, \left(\frac{3C_1}{2^\alpha N_2}\right)^{\frac{2}{2\alpha-\gamma-1}}, \left(\frac{3C_1}{2^\alpha N_2}\right)^{\frac{2}{\alpha-1}}, \left(\frac{3C_1}{2^\alpha N_2}\right)^{\frac{1}{2\alpha-\gamma-1}}\right\},$$
    as long as $\alpha>\max\left\{1,\frac{\gamma+1}{2}\right\}$.
    The proof is completed.
\end{proof}

Next, we deal with the estimates of $u_t$.

\begin{lemma}\label{A4}
    Assume 
    $$\alpha>\max\left\{1,\frac{\gamma+1}{2}\right\}, \quad b<2,$$
    and 
    $$\frac{5q}{2(4q-3)}(\gamma-1)\le \alpha.$$
    There exists a positive constant $\Lambda_2=\Lambda_2(a,\lambda,\mu, \alpha,\gamma, \|u_0\|_{H^2}, \|\rho_0-\bar \rho\|_{L^{\gamma}}, \|\nabla \rho_0\|_{L^q})$ such that 
    \begin{equation}
        \mathcal{E}_{u,2}(T)\le  2 N_3 \bar\rho^{2\alpha-1},
    \end{equation}
    provided $\bar \rho  >\Lambda_2$. Here, $N_3$ is defined in \eqref{m2}.

\end{lemma}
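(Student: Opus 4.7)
The strategy is a second-order energy estimate obtained by applying $\partial_t$ to the momentum equation $\rho u_t+\rho u\cdot\nabla u+\nabla P=\nabla\cdot\mathbb{T}$ and testing against $u_t$. The identity $\int\rho u\cdot\nabla u_t\cdot u_t\,dx=\tfrac12\int\rho_t|u_t|^2\,dx$ (integration by parts combined with continuity) together with $\int\rho u_{tt}\cdot u_t\,dx=\tfrac{d}{dt}\tfrac12\|\sqrt{\rho}u_t\|_{L^2}^2-\tfrac12\int\rho_t|u_t|^2\,dx$ reorganises the left-hand side into $\tfrac12\tfrac{d}{dt}\|\sqrt{\rho}u_t\|_{L^2}^2$, and after integrating by parts on the viscous term one picks up the dissipation $\int(2\mu\rho^\alpha|\mathcal{D}u_t|^2+\lambda\rho^\alpha(\dv u_t)^2)\,dx$, bounded below by $\tfrac{\mu\bar\rho^\alpha}{2^{\alpha+1}}\|\nabla u_t\|_{L^2}^2$ via $\rho\ge\bar\rho/2$ and Korn's inequality. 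The source $\sum_i J_i$ then collects five terms: $-\int\rho_t|u_t|^2\,dx$, $-\int\rho_t u\cdot\nabla u\cdot u_t\,dx$, $-\int\rho u_t\cdot\nabla u\cdot u_t\,dx$, $\int P_t\dv u_t\,dx$, and $-\int\bigl(2\mu(\rho^\alpha)_t\mathcal{D}u+\lambda(\rho^\alpha)_t\dv u\,\mathbb{I}_3\bigr):\nabla u_t\,dx$ arising from the time-differentiated viscous stress.

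Each source term is then reduced by H\"older and the Gagliardo-Nirenberg interpolation of Lemma \ref{G-N}, after substituting $\rho_t=-\rho\dv u-\nabla\rho\cdot u$ and $(\rho^\alpha)_t=\alpha\rho^{\alpha-1}\rho_t$, and feeding in the $\bar\rho$-independent bounds $\sup_t\|\nabla u\|_{L^6}\le C$ and $\int_0^T\|\nabla u\|_{L^6}^3\,dt\le C$ from Lemma \ref{lem1}, the bound on $\sup_t\|\nabla u\|_{L^2}$ from Lemma \ref{A3}, and $\|\nabla\rho\|_{L^q}^2\le 2N_1\bar\rho^{b}$ from \eqref{a11}. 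The objective is to express every contribution as
\[
|J_i|\le\varepsilon\frac{\mu\bar\rho^\alpha}{2^{\alpha+1}}\|\nabla u_t\|_{L^2}^{2}+Cf_i(t)\|\sqrt{\rho}u_t\|_{L^2}^{2}+C\bar\rho^{2\alpha-1}\bar\rho^{-\delta_i}g_i(t),
\]
with $\|f_i\|_{L^1(0,T)}$, $\|g_i\|_{L^1(0,T)}$ independent of $\bar\rho$ and $\delta_i>0$ under the hypotheses. The extra assumption $\tfrac{5q}{2(4q-3)}(\gamma-1)\le\alpha$ (beyond those of Lemma \ref{A3}) is expected to enter precisely through $\int P_t\dv u_t\,dx=a\gamma\int\rho^{\gamma-1}\rho_t\dv u_t\,dx$, whose estimate after substitution of $\rho_t$ parallels the pressure bound appearing in \eqref{e1}.

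The initial datum is evaluated directly from the momentum equation at $t=0$: the identity $\rho_0 u_t(0)=\nabla\cdot\mathbb{T}(\rho_0,u_0)-\rho_0 u_0\cdot\nabla u_0-\nabla P(\rho_0)$ combined with the bounds on $\rho_0$ gives $\|\sqrt{\rho_0}u_t(0)\|_{L^2}^2\le N_3\bar\rho^{2\alpha-1}$, the dominant contribution coming from $\bar\rho^{-1}\|\nabla\cdot\mathbb{T}(\rho_0,u_0)\|_{L^2}^2\lesssim\bar\rho^{2\alpha-1}\bigl(\|\nabla^2u_0\|_{L^2}^2+\|\nabla\rho_0\|_{L^q}^2\|\nabla u_0\|_{H^1}^2\bigr)$. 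Integrating the differential inequality and applying Gronwall then yields $\mathcal{E}_{u,2}(T)\le(N_3+C\bar\rho^{-\delta})\bar\rho^{2\alpha-1}\exp(C)$, and $\Lambda_2$ is fixed large enough that $C\bar\rho^{-\delta}$ is absorbed into $N_3$. The main obstacle, as in Lemma \ref{A3}, is the careful bookkeeping of $\bar\rho$-exponents across all five source terms --- especially the cross term involving $(\rho^\alpha)_t\mathcal{D}u:\nabla u_t$, whose factor $\rho^{\alpha-1}\|\nabla\rho\|_{L^q}$ must be interpolated against the dissipation $\bar\rho^\alpha\|\nabla u_t\|_{L^2}^2$ --- and verifying that the hypotheses $\alpha>\max\{1,(\gamma+1)/2\}$, $b<2$, and $\tfrac{5q}{2(4q-3)}(\gamma-1)\le\alpha$ translate exactly into each $\delta_i>0$.
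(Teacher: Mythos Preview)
Your overall strategy---differentiate the momentum equation in $t$, test with $u_t$, split into five source terms, and close via the a priori bounds from Lemma~\ref{lem1} and Lemma~\ref{A3}---is exactly the paper's. But you have misplaced where the hypothesis $\tfrac{5q}{2(4q-3)}(\gamma-1)\le\alpha$ is actually needed. In the paper the pressure source $I_5=-\int P_t\,\dv u_t\,dx$ is estimated directly as $\|P_t\|_{L^2}\|\dv u_t\|_{L^2}$ with $P_t=-a\gamma\rho^{\gamma-1}(\rho\,\dv u+u\cdot\nabla\rho)$, producing only the powers $\bar\rho^{2\gamma-2\alpha+1}$ and $\bar\rho^{\gamma-2+b}$ after time integration; both are handled by $\alpha>(\gamma+1)/2$ and $b<2$ alone. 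The $q$-dependent condition enters instead through the \emph{initial data}: since $\nabla\rho_0$ is only assumed in $L^q$ (not $L^2$), the term $\bar\rho^{-1/2}\|\nabla P(\rho_0)\|_{L^2}$ in the expansion of $\sqrt{\rho_0}\,u_t(0)$ must be controlled by a Gagliardo--Nirenberg interpolation between $\|P(\rho_0)-P(\bar\rho)\|_{L^1}$ and $\|\nabla P(\rho_0)\|_{L^q}$, and this is precisely what forces $\bar\rho^{-1/2+\frac{5q}{2(4q-3)}(\gamma-1)}\le C\bar\rho^{\alpha-1/2}$. Your initial-data paragraph names only the viscous contribution as dominant and omits this pressure term, so as written the role of the extra hypothesis is not accounted for.

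A second, smaller issue: your Gr\"onwall conclusion $(N_3+C\bar\rho^{-\delta})\bar\rho^{2\alpha-1}\exp(C)$ with $C$ ``independent of $\bar\rho$'' does not close the bootstrap at $2N_3\bar\rho^{2\alpha-1}$ unless $\exp(C)<2$. The paper sidesteps Gr\"onwall entirely: it feeds the a priori bootstrap assumption $\sup_t\|\sqrt{\rho}u_t\|_{L^2}^2\le 3N_3\bar\rho^{2\alpha-1}$ directly into the coefficient of $\|\sqrt{\rho}u_t\|_{L^2}^2$ appearing in $I_1+I_2$, converting that term into $C\bar\rho^{2-\alpha}\|\nabla u\|_{L^2}^2$, which integrates to a lower-order power. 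If you prefer Gr\"onwall you must instead argue that $\int_0^T f_i\,dt\le C\bar\rho^{-\delta'}\to0$ (which is true here, since e.g.\ $\bar\rho^{3-3\alpha}\int_0^T\|\nabla u\|_{L^2}^2\,dt\le C\bar\rho^{4-4\alpha}$), so that the exponential factor tends to $1$ for large $\bar\rho$.
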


\begin{proof}
    Differentiating \eqref{cns}$_2$ with repect to $t$, multiplying it by $u_t$ and integrating by parts, we have that 
    \begin{equation}\label{k2}
        \begin{aligned}
            &\frac{1}{2} \frac{d}{dt} \int \rho |u_t|^2 dx + \int  \left(  2 \mu\rho^\alpha |\mathcal{D} u_t|^2+ \lambda \rho^\alpha (\dv u_t)^2\right)dx\\
            =& \int  \dv(\rho u) |u_t|^2 dx -\int  \rho u_t \cdot \nabla u \cdot u_t dx + \int  \dv(\rho u) u \cdot \nabla u \cdot u_t dx  \\
            &+ \int \left( 2 \mu(\rho^\alpha)_t \mathcal{D}u: \mathcal{D}u_t+ \lambda(\rho^\alpha)_t \dv u \dv u_t \right)dx- \int P_t \dv u_t dx\\
            =&\sum_{i=1}^{5} I_i.
        \end{aligned}
    \end{equation}

    It follows from H\"older and Sobolev inequalities, and \eqref{a11}, \eqref{key} that 
    \begin{equation}
        \begin{aligned}
            I_1+I_2\le& C \int  (\bar\rho| \nabla u| +| \nabla \rho \cdot u|) |u_t|^2 dx  \\
            \le &C \left( \bar \rho  \|\nabla u\|_{L^2} +  \|\nabla \rho\|_{L^q} \|u \|_{L^{\frac{2q}{q-2}}}  \right) \| u_t \|_{L^4}^{2} \\
            \le &C \left( \bar \rho^{\frac34}  \|\nabla u\|_{L^2} +  \bar \rho^{-\frac14} \|\nabla \rho\|_{L^q} \|u \|_{L^{2}}^{\frac{q-3}{q}} \|\nabla u \|_{L^{2}}^{\frac{3}{q}} \right) \|\sqrt{\rho} u_t \|_{L^2}^{\frac12} \|\nabla u_t \|_{L^2}^{\frac32} \\
            \le & \frac{\mu}{2^{\alpha+3}} \bar \rho^{\alpha} \|\nabla u_t\|_{L^2}^2 + C\left(\bar \rho^{3-3\alpha} \|\nabla u\|_{L^2}^4  +   \bar \rho^{-3\alpha-1} \|\nabla \rho\|_{L^q}^4  \|\nabla u \|_{L^{2}}^{\frac{12}{q}}  \right) \|\sqrt{\rho} u_t \|_{L^2}^2 \\
            \le & \frac{\mu}{2^{\alpha+3}} \bar \rho^{\alpha} \|\nabla u_t\|_{L^2}^2 + C\left(\bar \rho^{3-3\alpha} \|\nabla u\|_{L^2}^2  +   \bar \rho^{-3\alpha-1+2b}   \|\nabla u \|_{L^{2}}^{2}  \right) \|\sqrt{\rho} u_t \|_{L^2}^2\\
            \le & \frac{\mu}{2^{\alpha+3}} \bar \rho^{\alpha} \|\nabla u_t\|_{L^2}^2 + C \bar \rho^{2-\alpha} \|\nabla u\|_{L^2}^2,   
        \end{aligned}
    \end{equation}
    also we can deduce 
    \begin{equation}
        \begin{aligned}
            I_3\le & \int  |\rho \dv u+ \nabla \rho \cdot u| |u| | \nabla u| |u_t| dx\\
            \le & C \left(\bar \rho \|u\|_{L^6} \|\nabla u\|_{L^3}^2  \| u_t\|_{L^6} + \|\nabla \rho\|_{L^q}  \|u\|_{L^6}^2  \|\nabla u\|_{L^{\frac{2q}{q-2}}} \|u_t\|_{L^6} \right)\\
            \le & \frac{\mu}{2^{\alpha+3}} \bar \rho^{\alpha} \|\nabla u_t\|_{L^2}^2 + C \bar \rho^{2-\alpha} \|\nabla u\|_{L^2}^4 \|\nabla u\|_{L^6}^2 + C \bar \rho^{ - \alpha} \|\nabla \rho \|_{L^q}^2  \|\nabla u\|_{L^2}^2\\
            \le & \frac{\mu}{2^{\alpha+3}} \bar \rho^{\alpha} \|\nabla u_t\|_{L^2}^2 + C \bar \rho^{2-\alpha} \|\nabla u\|_{L^2}^2  + C \bar \rho^{ - \alpha+b}  \|\nabla u\|_{L^2}^2\\
            \le & \frac{\mu}{2^{\alpha+3}} \bar \rho^{\alpha} \|\nabla u_t\|_{L^2}^2 + C \bar \rho^{2-\alpha} \|\nabla u\|_{L^2}^2  .
        \end{aligned}
    \end{equation}
    Similarly, we can estimate $I_4$ as follows:
    \begin{equation}
        \begin{aligned}
            I_4\le & C  \bar \rho^{\alpha-1}\int |\rho \dv u+ \nabla \rho \cdot u| |\nabla  u||\nabla  u_t| dx\\
            \le & C \left(\bar \rho^{\alpha} \|\nabla u\|_{L^4}^2 \|\nabla  u_t\|_{L^2} + \bar \rho^{\alpha-1}  \|\nabla \rho\|_{L^q}  \|u\|_{L^\frac{3q}{q-3}}  \|\nabla u\|_{L^6} \|\nabla u_t\|_{L^2} \right)\\
            \le & \frac{\mu}{2^{\alpha+3}} \bar \rho^{\alpha} \|\nabla u_t\|_{L^2}^2 + C \bar \rho^{\alpha} \|\nabla u\|_{L^2} \|\nabla u\|_{L^6}^3 + C \bar \rho^{\alpha-2} \|\nabla \rho \|_{L^q}^2  \|\nabla u\|_{L^2}^{\frac{3(q-2)}{q}} \|\nabla u\|_{L^6}^{2+\frac{6-q}{q}}\\
            \le & \frac{\mu}{2^{\alpha+3}} \bar \rho^{\alpha} \|\nabla u_t\|_{L^2}^2 + C \bar \rho^{\alpha} \|\nabla u\|_{L^2} \|\nabla u\|_{L^6}^3 + C \bar \rho^{\alpha-2+b}  \|\nabla u\|_{L^2}^{\frac{3(q-2)}{q}} \|\nabla u\|_{L^6}^{2+\frac{6-q}{q}}\\
            \le & \frac{\mu}{2^{\alpha+3}} \bar \rho^{\alpha} \|\nabla u_t\|_{L^2}^2 + C \bar \rho^{\alpha} \|\nabla u\|_{L^2} \|\nabla u\|_{L^6}^3 + C \bar \rho^{\alpha}  \|\nabla u\|_{L^2}^{\frac{3(q-2)}{q}} \|\nabla u\|_{L^6}^{2+\frac{6-q}{q}},
        \end{aligned}
    \end{equation}
    and $I_5$ is bounded by
    \begin{equation}
        \begin{aligned}
            I_5\le& C  \bar \rho^{\gamma-1}\int |\rho \dv u+ \nabla \rho \cdot u| |\dv u_t| dx \\
            \le & C \left(\bar \rho^{\gamma} \|\dv u\|_{L^2} \|\dv u_t\|_{L^2} + \bar \rho^{\gamma-1}  \|\nabla \rho\|_{L^q}  \|u\|_{L^{\frac{2q}{q-2}}} \|\dv u_t\|_{L^2} \right)\\
            \le & \frac{\mu}{2^{\alpha+3}} \bar \rho^{\alpha} \|\nabla u_t\|_{L^2}^2 + C \bar \rho^{2\gamma-\alpha} \|\nabla u\|_{L^2}^2 + C \bar \rho^{2\gamma-2-\alpha} \|\nabla \rho \|_{L^q}^2 \|u\|_{L^2}^{\frac{2(q-3)}{q}} \|\nabla u\|_{L^2}^{\frac{6}{q}}\\
            \le & \frac{\mu}{2^{\alpha+3}} \bar \rho^{\alpha} \|\nabla u_t\|_{L^2}^2 + C \bar \rho^{2\gamma-\alpha} \|\nabla u\|_{L^2}^2 + C \bar \rho^{2\gamma-2-\alpha}  \|\nabla \rho\|_{L^q}^{2}.
        \end{aligned}
    \end{equation}

    Integrating \eqref{k2} and using the above estimates and \eqref{key1}, we have 
    \begin{equation}\label{k3}
        \begin{aligned}
            &\frac{1}{2} \sup_{0\le t\le T}  \|\sqrt{\rho} u_t \|_{L^2}^2 +   \mu \frac{\bar \rho^\alpha}{2^\alpha} \int_{0}^{T} \|\nabla u_t\|^2_{L^2} dt+ (\mu+\lambda) \frac{\bar \rho^\alpha}{2^\alpha} \int_{0}^{T} \|\dv u_t\|_{L^2}^2 dt\\
            \le &  \frac{1}{2} \sup_{0\le t\le T}  \|\sqrt{\rho} u_t \|_{L^2}^2 + \int_{0}^{T} \int  \left(  2 \mu\rho^\alpha |\mathcal{D} u_t|^2+ \lambda \rho^\alpha (\dv u_t)^2\right)dx dt \\
            \le & \frac{1}{2} \|\sqrt{\rho_0} u_{0t} \|_{L^2}^2+ \frac{\mu}{2^{\alpha+1}} \bar \rho^\alpha \int_{0}^{T} \|\nabla u_t\|^2_{L^2} dt + C \bar \rho^{2\gamma-2-\alpha} \int_{0}^{T}  \|\nabla \rho\|_{L^q}^{2} dt \\
            &+   C \left( \bar \rho^{2-\alpha} + \bar \rho^{2\gamma-\alpha} \right) \int_{0}^{T} \|\nabla u\|_{L^2}^2 dt \\
            &+ C \bar \rho^{\alpha}  \bigg( \int_{0}^{T} \|\nabla u\|_{L^2} \|\nabla u\|_{L^6}^3 dt + \int_{0}^{T} \|\nabla u\|_{L^2}^{\frac{3(q-2)}{q}} \|\nabla u\|_{L^6}^{2+\frac{6-q}{q}} dt \bigg)\\
            \le &  \frac{1}{2} \|\sqrt{\rho_0} u_{0t} \|_{L^2}^2+ \frac{\mu}{2^{\alpha+1}} \bar \rho^\alpha \int_{0}^{T} \|\nabla u_t\|^2_{L^2} dt +  C \left(  \bar \rho^{\alpha} + \bar \rho^{2\gamma-2\alpha +1}  + \bar \rho^{\gamma-2 +b} \right).  
        \end{aligned}
    \end{equation}

    Define 
    \begin{equation*}
        \rho_0^{1/2} u_{t0} \triangleq \rho_0^{-1/2}\left(-\rho_0 u_0\cdot \nabla u_0 - \nabla P(\rho_0) +2\mu \dv( \rho_0^{\alpha}\mathcal{D} u_0)+\lambda\nabla( \rho_0^{\alpha} \dv u_0)\right),
    \end{equation*}
    then it follows from \eqref{ia} that 
    \begin{equation}\label{ki}
    \begin{aligned}
        \|\sqrt{\rho_0} u_{t0}\|_{L^2} \le& C 
        \left(\bar \rho^{\frac12} \|\nabla u_0\|_{L^2}^{\frac{3}{2}} \|\nabla u_0\|_{L^6}^{\frac{1}{2}} + \bar \rho^{-\frac{1}{2}}\|\nabla P(\rho_0)\|_{L^2}\right) \\
        &+ C \left(\bar \rho^{\alpha-\frac{1}{2}}\|\nabla^2 u_0\|_{L^2} + \bar \rho^{\alpha-\frac{3}{2}} \|\nabla \rho_0\|_{L^q} \|\nabla u_0\|_{L^{\frac{2q}{q-2}}} \right)\\
        \le& C  \left(\bar \rho^{-\frac{1}{2}} \|P(\rho_0)-{P}(\bar \rho)\|_{L^1}^{\frac{3(q-2)}{2(4q-3)}} \|\nabla P(\rho_0)\|_{L^q}^{\frac{5q}{2(4q-3)}}+ \bar \rho^{\alpha-\frac{1}{2}}\right)\\
        \le & C\left(\bar \rho^{-\frac{1}{2}+ \frac{5q(\gamma-1)}{2(4q-3)}}+ \bar \rho^{\alpha-\frac{1}{2}}\right)\\
        \le & C_2 \bar \rho^{\alpha-\frac{1}{2}},
    \end{aligned}
    \end{equation}
    due to 
    $\frac{5q}{2(4q-3)}(\gamma-1)\le \alpha.$

    By combining \eqref{k3} with \eqref{ki}, we obtain
    \begin{equation}
        \begin{aligned}
            &\sup_{0\le t\le T}  \|\sqrt{\rho} u_t \|_{L^2}^2 +   \mu \frac{\bar \rho^\alpha}{2^{\alpha+1}} \int_{0}^{T} \|\nabla u_t\|^2_{L^2} dt\\
            \le &  N_3 \bar \rho^{2\alpha-1}  +  C_3 \left( \bar \rho^{\alpha} + \bar \rho^{2\gamma-2\alpha +1} + \bar \rho^{\gamma} \right) \\
            \le & 2 N_3 \bar \rho^{2\alpha-1},  
        \end{aligned}
    \end{equation}
    after defining
    \begin{equation}\label{m2}
       N_3=N_3(a,\mu,\lambda, \alpha, \gamma, \|\rho_0-\bar \rho\|_{L^\gamma}, \|\nabla \rho_0\|_{L^q}, \|\nabla u_0\|_{H^1}) \triangleq C_2^2, 
    \end{equation}
    and choosing 
    $$\bar \rho \ge \Lambda_2 \triangleq \max\left\{\Lambda_1, \left(\frac{3C_3}{ N_3}\right)^{\frac{1}{\alpha-1}}, \left(\frac{3C_3}{ N_3}\right)^{\frac{1}{2(2\alpha-\gamma-1)}}, \left(\frac{3C_3}{ N_3}\right)^{\frac{1}{2\alpha-\gamma-1}} \right\},$$
    as long as $\alpha>\max\left\{1,\frac{\gamma+1}{2}\right\}$.
\end{proof}

Now, the next lemma is concentrated on the estimates of density.

\begin{lemma}\label{A2}
    Assume that 
    \begin{equation}\label{c2}
        \begin{cases}
            3-\gamma\le b<2, &1<\gamma<3,\\
            b=0, &\gamma\ge 3.
        \end{cases}
    \end{equation}
    Then it holds that 
    \begin{equation}
        \mathcal{E}_{\rho,2}(T)\le   N_1 \bar\rho^{b},
    \end{equation}
    provided $\bar \rho>\Lambda_2$. Here, $N_1$ is defined in \eqref{m3}.
\end{lemma}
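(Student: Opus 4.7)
The plan is to derive a damped differential inequality for $\|\nabla\rho\|_{L^q}^q$ by inserting the effective-viscous-flux representation of $\dv u$ into the gradient of the continuity equation, pass to an inequality for $\|\nabla\rho\|_{L^q}^2$, and close via a Gronwall argument. Differentiating \eqref{cns}$_1$ in $x_i$ yields
\[
(\partial_i\rho)_t+(u\cdot\nabla)\partial_i\rho+\partial_i u^j\,\partial_j\rho+\partial_i(\rho\,\dv u)=0.
\]
Expressing $\dv u$ from the definition of $F$ in \eqref{flux} and expanding $\partial_i(\rho\,\dv u)$, the sub-term $\rho\,\partial_i(\dv u)$ supplies the pressure contribution $\tfrac{a\gamma}{2\mu+\lambda}\rho^{\gamma-\alpha}\partial_i\rho$ with the favourable sign, which upon testing against $|\nabla\rho|^{q-2}\partial_i\rho$ becomes a coercive damping. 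The residual terms involve $\partial_i F$ and the factor $g(\rho):=\mathcal{P}(\rho)-\mathcal{P}(\bar\rho)$.

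Multiplying by $|\nabla\rho|^{q-2}\partial_i\rho$, summing in $i$, integrating by parts on the transport term, and using $\rho\sim\bar\rho$ from \eqref{upbd} yield the damped identity
\[
\tfrac{1}{q}\tfrac{d}{dt}\|\nabla\rho\|_{L^q}^q+c_0\bar\rho^{\gamma-\alpha}\|\nabla\rho\|_{L^q}^q\le C\bigl(\|\nabla u\|_{L^\infty}+\|g(\rho)\|_{L^\infty}+\|F\|_{L^\infty}\bigr)\|\nabla\rho\|_{L^q}^q+C\bar\rho\|\nabla F\|_{L^q}\|\nabla\rho\|_{L^q}^{q-1}.
\]
The mean-value theorem gives $\|g(\rho)\|_{L^\infty}\lesssim\bar\rho^{\gamma-\alpha-1}\|\rho-\bar\rho\|_{L^\infty}$, and Sobolev embedding applied to \eqref{e2} controls $\|F\|_{L^\infty}$ by $\|\sqrt\rho u_t\|_{L^2}$ and lower-order quantities; using the bootstrap $\|\rho-\bar\rho\|_{L^\infty}\le\bar\rho/2$ together with the largeness of $\bar\rho$, both pieces can be made small enough to be absorbed into $c_0\bar\rho^{\gamma-\alpha}\|\nabla\rho\|_{L^q}^q$. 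The identity $\tfrac{d}{dt}\|\nabla\rho\|_{L^q}^q=\tfrac{q}{2}\|\nabla\rho\|_{L^q}^{q-2}\tfrac{d}{dt}\|\nabla\rho\|_{L^q}^2$ lets us pass to the $L^2$-version, and Young's inequality on the $\bar\rho\|\nabla F\|_{L^q}\|\nabla\rho\|_{L^q}$ term (with weight $\bar\rho^{\gamma-\alpha}$) absorbs $\|\nabla\rho\|_{L^q}^2$ on the left, leaving the forcing $\bar\rho^{2-\gamma+\alpha}\|\nabla F\|_{L^q}^2$:
\[
\tfrac{d}{dt}\|\nabla\rho\|_{L^q}^2+c_1\bar\rho^{\gamma-\alpha}\|\nabla\rho\|_{L^q}^2\lesssim\|\nabla u\|_{L^\infty}\|\nabla\rho\|_{L^q}^2+\bar\rho^{2-\gamma+\alpha}\|\nabla F\|_{L^q}^2.
\]

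A Gronwall argument with integrating factor $e^{c_1\bar\rho^{\gamma-\alpha}t}$, integrated on $[0,T]$, then produces
\[
\sup_{t\in[0,T]}\|\nabla\rho\|_{L^q}^2+c_1\bar\rho^{\gamma-\alpha}\int_0^T\|\nabla\rho\|_{L^q}^2\,dt\le C\|\nabla\rho_0\|_{L^q}^2+C\bar\rho^{2-\gamma+\alpha}\int_0^T\|\nabla F\|_{L^q}^2\,dt,
\]
once $\int_0^T\|\nabla u\|_{L^\infty}\,dt$ is controlled uniformly in $T$ (via \eqref{rho1} combined with the a priori bounds on $\mathcal{E}_{u,1}$ and $\mathcal{E}_{u,2}$). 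The elliptic estimate \eqref{e3} bounds $\|\nabla F\|_{L^q}\lesssim\bar\rho^{-\alpha}\|\rho u_t\|_{L^q}+(\text{lower order})$, whose time-integral is controlled by $\mathcal{E}_{u,2}\le 2N_3\bar\rho^{2\alpha-1}$. Tracking the resulting $\bar\rho$-exponents, the constraint \eqref{c2}---in particular $b\ge 3-\gamma$ when $1<\gamma<3$, and $b=0$ when $\gamma\ge 3$---is exactly what is needed for every contribution to the RHS to fit inside $N_1\bar\rho^b$ for a constant $N_1$ depending only on the data. The main obstacle is the delicate same-order interplay between the coercive damping $c_0\bar\rho^{\gamma-\alpha}\|\nabla\rho\|_{L^q}^q$ and the forcing $\|g(\rho)\|_{L^\infty}\|\nabla\rho\|_{L^q}^q$: the absorption succeeds only by carefully combining the bootstrap hypothesis $\|\rho-\bar\rho\|_{L^\infty}\le\bar\rho/2$ with the quantitative smallness of the other forcing factors at large $\bar\rho$, together with a careful estimation of $\int_0^T\|\nabla u\|_{L^\infty}\,dt$ using \eqref{rho1} and the $L^2_t$-bound on $\|\nabla u_t\|_{L^2}$.
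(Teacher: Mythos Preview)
Your overall strategy matches the paper's, but two steps in the proposal do not close.

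First, the absorption of $\|g(\rho)\|_{L^\infty}\|\nabla\rho\|_{L^q}^q$ into the damping cannot be made to work by taking $\bar\rho$ large. Since $\mathcal P'(s)=a\gamma s^{\gamma-\alpha-1}$, the mean-value estimate gives $\|g(\rho)\|_{L^\infty}\le C\bar\rho^{\gamma-\alpha-1}\|\rho-\bar\rho\|_{L^\infty}\le C\bar\rho^{\gamma-\alpha}$, which has \emph{exactly} the same $\bar\rho$-scaling as the damping coefficient $c_0\bar\rho^{\gamma-\alpha}$; the ratio is a fixed constant in $\alpha,\gamma$ that need not be less than $1$ (and in fact exceeds $1$ for $\alpha-\gamma$ moderately large). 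The paper sidesteps this entirely: it does \emph{not} insert the flux decomposition into the sub-term $\partial_i\rho\,\dv u$, but simply bounds that term (together with $\partial_iu^j\partial_j\rho$) by $C\|\nabla u\|_{L^\infty}\|\nabla\rho\|_{L^q}^q$. Only the sub-term $\rho\,\partial_i(\dv u)$ is rewritten via $F$ and $\mathcal P$, producing the damping and the $\bar\rho\|\nabla F\|_{L^q}\|\nabla\rho\|_{L^q}^{q-1}$ forcing. No $\|g\|_{L^\infty}$ or $\|F\|_{L^\infty}$ term appears.

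Second, the Gronwall step as written requires a uniform-in-$T$ bound on $\int_0^T\|\nabla u\|_{L^\infty}\,dt$, which is not available from the bootstrap: the a priori estimates only yield $\int_0^T\|\nabla u\|_{L^\infty}^2\,dt\le C\bar\rho^{\gamma-\alpha}$ (the paper's \eqref{ep2}), and Cauchy--Schwarz introduces an uncontrolled factor $T^{1/2}$. The paper's remedy is to first apply Young's inequality with weight $\bar\rho^{\gamma-\alpha}$ to the cross-term,
\[
C\|\nabla u\|_{L^\infty}\|\nabla\rho\|_{L^q}^2\le \tfrac14\bar\rho^{\gamma-\alpha}\|\nabla\rho\|_{L^q}^2+C\bar\rho^{\alpha-\gamma}\|\nabla u\|_{L^\infty}^2\|\nabla\rho\|_{L^q}^2,
\]
absorb the first piece, and then run Gronwall with factor $\exp\bigl\{C\bar\rho^{\alpha-\gamma}\int_0^T\|\nabla u\|_{L^\infty}^2\,dt\bigr\}\le e^{C}$. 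With this and \eqref{ep1}, the forcing contributes $\bar\rho^{2-\alpha-\gamma}\cdot\bar\rho^{1+\alpha}=\bar\rho^{3-\gamma}$ and $\bar\rho^{4-\alpha-\gamma}\cdot\bar\rho^{\gamma-\alpha}=\bar\rho^{4-2\alpha}\le\bar\rho^{3-\gamma}$, which is precisely why the constraint \eqref{c2} on $b$ arises.
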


\begin{proof}
    It follows from \eqref{cns}$_1$ and \eqref{flux} that  
    \begin{equation}
        \nabla \rho_t + u \cdot \nabla^2 \rho + \nabla u \cdot \nabla \rho + \nabla \rho \dv u+ \frac{1}{2\mu+\lambda} \rho \left(\nabla F + \nabla \mathcal{P} \right)=0.
    \end{equation}
    Multiplying the above equation by $|\nabla \rho|^{q-2}\nabla \rho$ and then integrating by parts, we have 
    \begin{equation}
        \begin{aligned}
            &\frac{1}{p}\frac{d}{dt} \|\nabla \rho\|_{L^q}^q + \frac{a \gamma }{2\mu+\lambda}\int  \rho^{\gamma-\alpha}|\nabla \rho|^qdx \\
            =& \frac{1-q}{q}\int \dv u |\nabla \rho |^qdx - \int \nabla \rho \cdot \nabla u \cdot \nabla \rho |\nabla \rho|^{q-2} dx - \frac{ 1 }{2\mu+\lambda} \int \rho \nabla F \cdot \nabla \rho |\nabla \rho|^{p-2} dx \\
            \le & C \|\nabla u\|_{L^\infty} \|\nabla \rho \|_{L^q}^q + C \bar \rho \|\nabla F\|_{L^q} \|\nabla \rho \|_{L^q}^{q-1}.
        \end{aligned}
    \end{equation}
    Then, we have
    \begin{equation}
        \frac{d}{dt} \|\nabla \rho\|_{L^q}^2 + \bar \rho^{\gamma-\alpha}   \|\nabla \rho\|_{L^q}^2  \le C \|\nabla u\|_{L^\infty} \|\nabla \rho \|_{L^q}^2 + C \bar \rho \|\nabla F\|_{L^q} \|\nabla \rho \|_{L^q},
    \end{equation}
    which together with \eqref{e3} gives that 
    \begin{equation}
    \begin{aligned}
        &\frac{d}{dt} \|\nabla \rho\|_{L^q}^2 + \bar \rho^{\gamma-\alpha}   \|\nabla \rho\|_{L^q}^2\\
        \le& C \|\nabla u\|_{L^\infty} \|\nabla \rho \|_{L^q}^2 + C \bar\rho^{-\alpha+1} \|\rho u_t\|_{L^q} \|\nabla \rho \|_{L^q} + C \bar\rho^{-\alpha+2}  \| u\|_{L^q} \| \nabla u\|_{L^\infty} \|\nabla \rho \|_{L^q}\\
        \le & \frac12 \bar \rho^{\gamma-\alpha}   \|\nabla \rho\|_{L^q}^2 + C \bar \rho^{\alpha-\gamma} \|\nabla u\|_{L^\infty}^2 \|\nabla \rho \|_{L^q}^2 + C \bar\rho^{2-\alpha-\gamma} \|\rho u_t\|_{L^q}^2 + C \bar\rho^{4-\alpha-\gamma}   \| \nabla u\|_{L^\infty}^2.
    \end{aligned}
    \end{equation}

    Next, it follows from \eqref{a11} and \eqref{e-p} that 
    \begin{equation}\label{ep1}
        \begin{aligned}
            \int_{0}^{T}  \|\rho u_t\|_{L^q}^2 dt \le & \int_{0}^{T}  \bar\rho^{\frac{5q-6}{2q}}\|\sqrt{\rho}u_t\|_{L^2}^{\frac{6-q}{q}} \|\nabla u_t\|_{L^2}^{\frac{3(q-2)}{q}} dt\\
            \le & C \bar\rho^{\frac{5q-6}{2q}} \left(\int_{0}^{T} \|\sqrt{\rho}u_t\|_{L^2}^2 dt\right)^{\frac{6-q}{2q}} \left( \int_{0}^{T}\|\nabla u_t\|_{L^2}^2 dt \right)^{\frac{3(q-2)}{2q}}\\
            \le & C \bar\rho^{\frac{5q-6}{2q}+ \alpha \frac{6-q}{2q} + (\alpha-1)  \frac{3(q-2)}{2q}} = C \bar\rho^{1+\alpha},
        \end{aligned}
    \end{equation}
    and that 
    \begin{equation}\label{ep2}
        \begin{aligned}
            \int_{0}^{T}  \|\nabla u\|_{L^\infty}^2 dt 
            \le & C \int_{0}^{T} \|\nabla u\|_{L^2}^{\theta} \|\nabla^2 u\|_{L^q}^{1-\theta} dt\\
            \le & C \int_{0}^{T}  \biggl( \|\nabla u\|_{L^2}^{2 } 
            + \bar\rho^{-2\alpha}\|\rho u_t\|_{L^q}^2 + \bar\rho^{\frac{-2\alpha+2}{\theta}}  \| u\|_{L^q}^{\frac{2}{\theta}} \|\nabla u\|_{L^2}^2 \\
            & +\bar\rho^{-\frac{2}{\theta}} \|\nabla \rho\|_{L^q}^{\frac{2}{\theta}} \|\nabla u\|_{L^2}^2  
            + \bar \rho^{2\gamma-2\alpha-2}\|\nabla \rho \|_{L^q}^2\biggr) dt\\
            \le & C \int_{0}^{T}  \left( \|\nabla u\|_{L^2}^{2 } 
            + \bar\rho^{-2\alpha}\|\rho u_t\|_{L^q}^2  + \bar \rho^{2\gamma-2\alpha-2}\|\nabla \rho \|_{L^q}^2 \right) dt \\
            \le & C  \bar\rho^{1-\alpha} 
            + C\rho^{\gamma-\alpha-2 + b}\\
            \le & C \rho^{\gamma-\alpha},
        \end{aligned}
    \end{equation}
    with $\theta$ defined in \eqref{theta}.

    Finally, applying the Gronwall inequality to \eqref{rho1} and using \eqref{ep1}, \eqref{ep2}, we have 
    \begin{equation}
    \begin{aligned}
        &\sup_{0\le t\le T} \|\nabla \rho\|_{L^q}^2 + \frac{1}{2} \bar \rho^{\gamma-\alpha}   \int_{0}^{T} \|\nabla \rho\|_{L^q}^2 dt \\
        \le& \exp{\left\{\int_{0}^{T} C \bar \rho^{\alpha-\gamma} \|\nabla u\|_{L^\infty}^2 dt \right\}}\\
        &\cdot\left(\|\nabla \rho_0\|_{L^q}^2 + C \bar\rho^{2-\alpha-\gamma} \int_{0}^{T}\|\rho u_t\|_{L^q}^2 dt + C \bar\rho^{4-\alpha-\gamma}  \int_{0}^{T} \| \nabla u\|_{L^\infty}^2 dt\right)\\
        \le & C \left(\|\nabla \rho_0\|_{L^q}^2 + C \bar\rho^{3-\gamma} + C \bar\rho^{4-2\alpha}  \right)\\
        \le & C_4 \left(\|\nabla \rho_0\|_{L^q}^2 +  \bar\rho^{3-\gamma} \right)\\
        \le & N_1 \bar \rho^{b},
    \end{aligned}
    \end{equation}
    where $b$ satisfies \eqref{c2} and
    \begin{equation}\label{m3}
        N_1 \triangleq C_4 \left(\|\nabla \rho_0\|_{L^q}^2 + 1\right).
    \end{equation}
    
\end{proof}

\begin{proof}[The proof of the Proposition \ref{pr}]
    
    It only suffices to prove that 
    \begin{equation}
    \begin{aligned}
        \mathcal{E}_{\rho,1}(T)=\sup_{t\in[0,T] }\|\rho-\bar\rho \|_{L^\infty}\le &C \sup_{t\in[0,T] }\|\rho-\bar\rho \|_{L^\gamma}^{\frac{(q-3)\gamma }{3q+(q-3)\gamma}} \|\nabla \rho \|_{L^q}^{\frac{3q }{3q+(q-3)\gamma}}\\
        \le& C_5 \bar \rho^{\frac{(q-3)\gamma }{3q+(q-3)\gamma}+ \frac{b}{2} \frac{3q }{3q+(q-3)\gamma} } \le \frac{\bar \rho}{4}, 
    \end{aligned}
    \end{equation}
    provided that 
    \begin{equation}\label{lambda}
        \bar \rho \ge \Lambda_0 \triangleq \max \left\{\Lambda_2, (4C_5)^{\frac{2(3q+(q-3)\gamma)}{3q(2-b)}} \right\},
    \end{equation}
    and 
    $$b<2.$$

    Finally, choose $\beta$ as in \eqref{b}, then if 
    $$\alpha>1, \gamma>1, 2\alpha>\gamma+1,$$
    and 
    $$\frac{5q}{2(4q-3)}(\gamma-1)\le \alpha,$$
    $\beta$ satisfies \eqref{c1} and \eqref{c2} automatically. Hence,
    \eqref{a2} follows from Lemmas  \ref{A3}, \ref{A4}, \ref{A2}, and the proof is finished. 
\end{proof}

\section{Proof of the Theorem \ref{th1}}\label{s4}

By Lemma \ref{local}, there exists a positive time $T_*$ such that the Cauchy problem \eqref{cns}-\eqref{7} has a strong solution $(\rho,u)$ in $\mathbb{R}^3\times (0, T_*]$. So, to prove Theorem \ref{th1}, it suffices to prove that the local solution can be extended to be a global one. To this end, we assume from now on that $\rho \ge \Lambda_0$ holds with $\Lambda_0$ being defined in Proposition \ref{pr}. 

Set 
\begin{equation}\label{4.1}
    T^*=\sup\{T|(\rho,u) \text{ is a strong solution on } [0,T]\}. 
\end{equation}

We claim that 
\begin{equation}\label{4.2}
    T^*=\infty.
\end{equation}
In fact, if $T^*<\infty$. By virtue of Proposition \ref{pr}, it holds that $(\rho,u)|_{t=T^*}$ satisfies \eqref{ia}. Thus, Lemma \ref{local} implies that there exists some $T^{**}>T^*$ such that $(\rho,u)$ can be extended to be a strong solution to \eqref{cns}-\eqref{7} in $\mathbb{R}^3\times (0, T_{**}]$, which contradicts \eqref{4.1}. Hence, \eqref{4.2} holds.

\section*{Conflict-of-interest statement}
All authors declare that they have no conflicts of interest.

\section*{Acknowledgments}
X.-D. Huang is partially supported by NNSFC Grant Nos. 11971464, 11688101 and CAS
Project for Young Scientists in Basic Research, Grant No. YSBR-031, National Key R$\&$D Program of China, Grant No. 2021YFA1000800.
J.-X. Li was supported in part
by Zheng Ge Ru Foundation, Hong Kong RGC Earmarked Research Grants CUHK-14301421, CUHK-14300819,
CUHK-14302819, CUHK-14300917, the key project of NSFC (Grant No. 12131010)  the Shun Hing Education and Charity Fund.
Part of this work was done when R. Zhang was visiting the Institute of Mathematical Sciences at the Chinese University of Hong Kong. They would like to thank the institute for its hospitality.

\bibliographystyle{siam}

\end{document}